\newcommand{\om}{\omega}
\newcommand{\Om}{\Omega}
\newcommand{\ka}{\kappa}
\newcommand{\si}{\sigma}
\newcommand{\ba}{\mathcal{G}}  
\newcommand{\va}{\varphi}
\newcommand{\fg}{\mathfrak g}
\newcommand{\fp}{\mathfrak p}
\newcommand{\Ad}{{\rm Ad}}
\newcommand{\na}{\nabla}
\newcommand{\Rho}{{\mbox{\sf P}}}
\newcommand{\U}{\Upsilon}
\newcommand{\gr}{{\operatorname{gr}}}
\newcommand{\Gr}{{\operatorname{Gr}}}
\newtheorem*{prop*}{Proposition}
\newtheorem*{thm*}{Theorem}
\newtheorem{lemma}[subsection]{Lemma}
\newtheorem*{lemma*}{Lemma}
\newtheorem*{cor*}{Corollary}
\newtheorem*{rem*}{Remark}
\theoremstyle{definition}
\newtheorem*{def*}{Definition}
\theoremstyle{remark}
\def\R{\mathbb{R}}
\def\x{\times}
\def\o{\circ}
\def\al{\alpha}
\def\fq{\mathfrak q}
\def\fh{\mathfrak h}
\def\G{\mathcal{G}}
\def\P{\mathcal{P}}
\def\A{\mathcal{A}}
\def\L{\mathcal{L}}
\def\D{\mathcal{D}}
\def\tG{\tilde G}
\def\tP{\tilde P}
\def\tg{\tilde\fg}
\def\tp{\tilde\fp}
\def\tom{\tilde\om}
\def\pmat#1{\begin{pmatrix}#1\end{pmatrix}}
\def\id{\operatorname{id}}
\def\Fl{\operatorname{Fl}}
\begin{document}
\title{Remarks on Grassmannian Symmetric Spaces}
\author{Lenka Zalabov\'a and Vojt\v ech \v Z\'adn\'ik}
\address{
Tomas Bata University, Zl\'in and Masaryk University, Brno, Czech Republic }
\email{zalabova@math.muni.cz and zadnik@math.muni.cz}

\subjclass{ 53C15, 53A40, 53C05, 53C35}
\keywords{parabolic geometries, Weyl structures, almost Grassmannian
structures, symmetric spaces}

\thanks{First author supported at different times by the Eduard \v Cech Center, project
nr.\ LC505, and the ESI Junior Fellows program; 
second author supported by the grant nr.\ 201/06/P379 of the Grant Agency of 
Czech Republic.}

\begin{abstract}
The classical concept of affine locally symmetric spaces allows a
generalization for various geometric structures on a smooth manifold. 
We remind the notion of symmetry for parabolic geometries and we summarize 
the known facts for $|1|$--graded parabolic geometries and for almost Grassmannian 
structures, in particular. 
As an application of two general constructions with parabolic geometries, 
we present an example of non--flat Grassmannian symmetric space.
Next we observe there is a distinguished torsion--free affine connection preserving the
Grassmannian structure so that, with respect to this connection, the Grassmannian symmetric
space is an affine symmetric space in the classical sense.
\end{abstract}

\maketitle


\section{Introduction}
Affine (locally) symmetric spaces present a very classical topic in
differential geometry, see e.g.\ to \cite[chapter XI]{KN2} for all details.
In particular,  a smooth manifold $M$ with an affine connection is called affine 
locally symmetric space 
if for each point $x\in M$ there is a local symmetry centered at $x$, i.e. a locally 
defined affine transformation $s_x$ such that $x$ is an isolated fixed point
of $s_x$ and  $T_xs_x=-\id$.
The notion of local symmetry is easily modified for various geometric
structures and there are attempts to understand these generalizations.
For instance, there is a lot known on the so--called projectively symmetric
spaces, see e.g.\ \cite{P} and references therein.
In the framework of parabolic geometries, the general definition of local symmetry
fits nicely especially for $|1|$--graded parabolic geometries which includes 
projective, conformal, and almost Grassmannian geometries as particular
examples.

We open the article with a review of basic ideas and concepts for parabolic
geometries, including the notion of Weyl structures and normal
coordinates. 
Then we introduce local symmetries for general parabolic geometries  and
provide their characterization in homogeneous model, Proposition \ref{2.4}.
There is a couple of general results for symmetric $|1|$--graded parabolic
geometries \cite{ja,dis} which we recover in section \ref{3} in a slightly
improved way. 
In particular, for a local symmetry centered at $x$, Theorem \ref{3.2}
provides an existence of a torsion--free Weyl connection, locally defined in a
neighborhood of $x$, which is invariant with respect to the symmetry and
whose Rho--tensor vanishes at $x$.
Then we focus on Grassmannian (locally) symmetric spaces, i.e.\ 
almost Grassmannian structures allowing a (local) symmetry at each point.
It turns out that the model Grassmannian structure is always symmetric and
a non--flat almost Grassmannian structure of type $(p,q)$ may be locally
symmetric only if $p$ or $q$ is 2.
In the latter case, the possible local symmetries at a point are heavily restricted, 
see \ref{3.4}.

The rest of the paper is devoted to the description of an example of non--flat
Grassmannian locally symmetric space of type $(2,q)$.
This appears as the space of chains of the homogeneous model of a parabolic
contact geometry, Theorem \ref{th1}.
(The notion of chains here generalizes the Chern--Moser chains on CR manifolds 
of hypersurface type.)
The example comes as an application of some general constructions from
\cite{CZ} and \cite{Cor}, dealing with the parabolic geometry associated to the 
path geometry of chains. 
The necessary background for a comfortable understanding of the result is 
presented in \ref{4.2} and \ref{4.3}.
In addition, the constructed space is globally symmetric and there is a
torsion--free affine connection preserving the Grassmannian structure which
is invariant with respect to some distinguished symmetries, Theorem \ref{4.5}.
Hence, we end up with an affine symmetric space with a compatible 
Grassmannian structure, Corollary \ref{4.5}.

\subsection*{Acknowledgements} 
We would like to mention the discussions with Andreas \v Cap, Boris
Doubrov, and Jan Slov\'ak, as well as the remarks by the anonymous referee, 
which were very helpful during the work on this paper. 

\section{Parabolic geometries, Weyl structures, and symmetries} \label{2}
In this section, we remind definitions and basic facts on Cartan
geometries, Weyl structures and symmetries for parabolic geometries.
We primarily refer to \cite{C,WS,parabook} for more intimate and comprehensive
introduction to parabolic geometries, the subsection dealing with symmetries 
is based on \cite{ja}.

\subsection{Definitions} 			\label{2.1}
Let $G$ be a Lie group, $P\subset G$ its Lie subgroup, and $\fp\subset\fg$ the
corresponding Lie algebras. 
A \emph{Cartan geometry} of type $(G,P)$ on a smooth manifold $M$ 
is a couple $(\G\to M,\om)$ consisting of a
principal  $P$--bundle $\ba \rightarrow M$ together with a 
one--form $\om \in \Omega^1(\ba,\fg)$, which is $P$--equivariant, 
reproduces the fundamental vector fields and induces a linear isomorphism
$T_u\ba\cong\fg$ for each $u \in \ba$. 
The one--form $\om$ is called the \emph{Cartan connection}.
The \emph{curvature} of  Cartan geometry is defined as 
$K:= d\om+\frac12[\om,\om]$, which is a two--form on $\G$ with values in $\fg$.
Easily, the $P$--bundle $G\rightarrow G/P$ with the (left) Maurer--Cartan form
$\mu\in\Om^1(G,\fg)$ form a Cartan geometry of type $(G,P)$ with vanishing
curvature, which we call the  \emph{homogeneous model}.

\emph{Parabolic geometry} is a Cartan geometry $(\ba \rightarrow M, \om)$  of 
type $(G,P)$, where $G$ is a semisimple Lie group and $P$ its parabolic 
subgroup. The Lie algebra $\fg$ of the Lie group $G$ is 
equipped (up to the choice of Levi factor $\fg_0$ in $\fp$) with the grading 
of the form $\fg=\fg_{-k}\oplus \dots \oplus \fg_{0} \oplus \dots \oplus\fg_{k}$ 
such that the Lie algebra $\fp$ of $P$ is $\fp = \fg_0 \oplus \dots \oplus \fg_k$. 
Suppose the grading of $\fg$ is fixed and further denote $\fg_{-}:=\fg_{-k} 
\oplus \dots \oplus \fg_{-1}$ and $\fp_+ := \fg_1\oplus\dots\oplus\fg_k$.
Parabolic geometry corresponding to the grading of length $k$ is called \emph{$|k|$--graded}. 
By $G_0$ we denote the subgroup in $P$, with the Lie algebra $\fg_0$,
consisting of  all elements in $P$ whose adjoint action preserves the grading of $\fg$. 
Next, defining $P_+:=\exp\fp_+$, we get $P/P_+=G_0$ and $P=G_0\rtimes P_+$.

The grading of $\fg$ induces a $P$--invariant filtration
$ \fg=\fg^{-k}\supset \fg^{-k+1} \supset \dots \supset \fg^{k}=\fg_k$,
where $\fg^{i}:=\fg_i \oplus \dots \oplus \fg_k$.
This gives rise to a filtration of the tangent bundle $TM$ as follows.
The Cartan connection $\om$ provides an identification $TM\cong\G\x_P(\fg/\fp)$
where the action of $P$ on $\fg/\fp$ is induced by the adjoint representation.
Hence each $P$--invariant subspace $\fg^{-i}/\fp\subset\fg/\fp$ defines the subbundle 
$T^{-i}M:=\G\x_P(\fg^{-i}/\fp)$ in $TM$, so we obtain the filtration 
$TM=T^{-k}M\supset\dots\supset T^{-1}M$.
Alternatively, the filtration is described using the \emph{adjoint tractor
bundle} which is the natural bundle $\A M:=\G\x_P\fg$ corresponding to the 
(restriction of) adjoint representation of $G$ on $\fg$.
The filtration of $\fg$ induces a filtration 
$\A M=\A^{-k}M\supset\dots\supset\A^0M\supset\dots\supset\A^kM$ so that
$T^{-i}M\cong\A^{-i}M/\A^0M$, in particular, $TM\cong\A M/\A^0M$.
Next by $\gr(TM)$ we denote the associated graded bundle
$\gr(TM)=\gr_{-k}(TM)\oplus\dots\oplus\gr_{-1}(TM)$, where
$\gr_{-i}(TM):=T^{-i}M/T^{-i+1}M$ is the associated bundle to $\G$ with
the standard fiber $\fg^{-i}/\fg^{-i+1}$ which is isomorphic to $\fg_{-i}$ as
a $G_0$--module.
Since $P_+\subset P$ acts freely on $\G$, the quotient $\G/P_+=:\G_0$ is a
principal bundle over $M$ with the structure group $P/P_+=G_0$.
Hence $\gr(TM)\cong\G_0\x_{G_0}\fg_-$ and the Lie bracket on $\fg_-$ induces
an algebraic bracket on $\gr(TM)$.

By definition, the curvature $K\in\Om^2(\G,\fg)$ of a parabolic geometry is
strictly horizontal and $P$--equivariant, hence it is fully described by a 
two--form on $M$ with values in $\G\x_P\fg=\A M$, which is denoted by $\ka$.
Note that by the same symbol we also denote the corresponding frame form,
which is a $P$--equivariant map $\ba \rightarrow 
\wedge^2 (\fg/\fp)^* \otimes \fg$, the so--called \emph{curvature function}.
The Killing form on $\fg$ provides an identification $(\fg/\fp)^*\cong\fp_+$,
hence the curvature function is viewed as having values in
$\wedge^2\fp_+\otimes\fg$.
The grading of $\fg$ brings a grading to this space and parabolic geometry
is called \emph{regular} if the curvature function  has values in the part of 
positive homogeneity.
The parabolic geometry is regular if and only if the algebraic bracket on
$\gr(TM)$ above coincides with the \emph{Levi bracket}, which is the natural
bracket induced by the Lie bracket of vector fields.
Next, the parabolic geometry is called \emph{torsion--free} if $\ka$ has values in 
$\wedge^2\fp_+\otimes \fp$; note that torsion--free parabolic geometry is
automatically regular.
Altogether, 
for a regular parabolic geometry, there is an underlying structure on $M$ consisting of
a filtration of the tangent bundle (which is compatible with the Lie bracket
of vector fields) and a reduction of the structure group of $\gr(TM)$ to the
subgroup $G_0$.

The correspondence between regular parabolic geometries of specified type and
the underlying structures can be made bijective (up to isomorphism) provided one impose 
some normalization condition:
The parabolic geometry is called \emph{normal} if $\partial^* \circ \ka =0$, where
$\partial^*:\wedge^2\fp_+\otimes\fg\to\fp_+\otimes\fg$ is the differential in
the standard complex computing the homology $H_*(\fp_+,\fg)$ of $\fp_+$ with coefficients in
$\fg$.
Dealing with regular normal parabolic geometries, there is the notion of
\emph{harmonic curvature} $\ka_H$, which is the composition of $\ka$ with the
natural projection $\ker(\partial^*)\to H_2(\fp_+,\fg)$.
By definition, $\ka_H$ is a section of $\G\x_P H_2(\fp_+,\fg)$ and, since $P_+$
acts trivially on $H_*(\fp_+,\fg)$, it can be interpreted in terms of the
underlying structure.
The main issue is that the harmonic curvature $\ka_H$ is much simpler object than the
curvature $\ka$, however, still involving the whole information about $\ka$.
Note that, as a $G_0$--module, each $H_j(\fp_+,\fg)$  is isomorphic 
to $\ker\square\subset\ker\partial^*\subset\wedge^j\fp_+\otimes\fg$, the kernel of the 
Kostant Laplacian.
As a consequence of \cite[Corollary 4.10]{CS}, which is an application of generalized 
Bianchi identity, we conclude:
\begin{lemma*}
  Let $\ka$ and $\ka_H$ be the Cartan curvature and the harmonic curvature
  of a regular normal parabolic geometry of type $(G,P)$.
  Then the lowest non--zero homogeneous component of $\ka$ has values in
  $\ker\square\subset\wedge^2\fp_+\otimes\fg$, i.e.\ it coincides with the
  corresponding homogeneous component of $\ka_H$.
  In particular, $\ka_H=0$ if and only if  $\ka=0$.
\end{lemma*}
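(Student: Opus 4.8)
The statement to prove is the Lemma: for a regular normal parabolic geometry, the lowest nonzero homogeneous component of the Cartan curvature $\kappa$ has values in $\ker\square$, hence agrees with the corresponding component of $\kappa_H$, and in particular $\kappa_H=0$ iff $\kappa=0$.

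Let me think about how this goes. The curvature function takes values in $\wedge^2\fp_+\otimes\fg$. This space is graded by homogeneity. The normality condition is $\partial^*\kappa = 0$, so $\kappa$ takes values in $\ker\partial^*$. By Kostant's Hodge theory, $\ker\partial^* = \operatorname{im}\partial^* \oplus \ker\square$ (orthogonal decomposition with respect to a suitable inner product), where $\square = \partial\partial^* + \partial^*\partial$ is the Kostant Laplacian. So the point is to show the lowest homogeneous part actually lands in $\ker\square$, i.e., has no component in $\operatorname{im}\partial^*$.

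The key input is the generalized (differential) Bianchi identity, which in the parabolic setting can be written schematically as $\partial\kappa = $ (something involving covariant derivatives of $\kappa$ and $\kappa$ itself, all of higher homogeneity). Concretely, by Corollary 4.10 of Cap–Schichl (or the analogous statement in the parabolic book), the algebraic differential $\partial$ applied to $\kappa$ equals a term that is homogeneous of degree strictly greater than $\kappa$'s lowest component.

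===

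**Proof plan.**

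\begin{proof}[Plan of proof]
The assertion follows from Kostant's version of Hodge theory for the codifferential $\partial^*$ together with the generalized Bianchi identity, which is exactly the content of \cite[Corollary 4.10]{CS}; we only indicate how the pieces fit.

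Write the curvature function $\ka\colon\ba\to\wedge^2\fp_+\otimes\fg$ as a sum $\ka=\sum_{\ell\ge\ell_0}\ka^{(\ell)}$ of its homogeneous components with respect to the grading of $\wedge^2\fp_+\otimes\fg$ induced by the grading of $\fg$, where $\ka^{(\ell_0)}$ denotes the lowest non--zero one (for $\ka=0$ there is nothing to prove). Regularity guarantees $\ell_0\ge1$. Since the geometry is normal we have $\partial^*\ka=0$, so each $\ka^{(\ell)}$ lies in $\ker\partial^*$. By Kostant's theorem the algebraic Hodge decomposition $\ker\partial^*=\operatorname{im}\partial^*\oplus\ker\square$ is $G_0$--invariant and respects the homogeneity grading (both $\partial$, $\partial^*$, hence $\square$, are $G_0$--equivariant and $\partial^*$ is homogeneous of degree~$0$ while $\partial$ raises homogeneity, so $\square$ preserves it). Thus it suffices to show that $\ka^{(\ell_0)}$ has no component in $\operatorname{im}\partial^*$; equivalently, since $\square$ restricted to $\operatorname{im}\partial^*$ is invertible, it is enough to prove $\square\ka^{(\ell_0)}=0$, and because $\partial^*\ka^{(\ell_0)}=0$ this reduces to $\partial^*\partial\ka^{(\ell_0)}=0$, for which in turn $\partial\ka^{(\ell_0)}=0$ suffices.

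The last identity is where the Bianchi identity enters. The generalized Bianchi identity expresses $\partial\ka$ as a combination of the covariant exterior derivative of $\ka$ and an algebraic term quadratic in $\ka$; crucially, as recorded in \cite[Corollary 4.10]{CS}, each of these contributions is homogeneous of degree strictly greater than~$\ell_0$ once one knows $\ka$ starts in degree $\ell_0$. Extracting the homogeneous component of degree $\ell_0$ from $\partial\ka=(\text{higher order})$ and using that $\partial$ raises homogeneity by exactly one, one concludes $\partial\ka^{(\ell_0)}=0$ (the component of $\partial\ka$ in degree $\ell_0+1$ coming solely from $\ka^{(\ell_0)}$, the right--hand side contributing nothing there). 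Hence $\ka^{(\ell_0)}\in\ker\partial^*\cap\ker\partial=\ker\square$, so it is a harmonic representative and coincides with the degree--$\ell_0$ component of $\ka_H$ under the projection $\ker\partial^*\to H_2(\fp_+,\fg)$. In particular $\ka_H=0$ forces $\ka^{(\ell_0)}=0$, contradicting minimality unless $\ka=0$; the converse is trivial. The only genuinely delicate point is the bookkeeping of homogeneities in the Bianchi identity, i.e.\ verifying that no term on the right--hand side can contribute in degree $\ell_0+1$ --- but this is precisely what \cite[Corollary 4.10]{CS} supplies.
\end{proof}
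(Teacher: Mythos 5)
Your argument is correct and is essentially the same route the paper takes, since the paper simply outsources this Lemma to \cite[Corollary 4.10]{CS}: normality plus Kostant's Hodge decomposition $\ker\partial^*=\operatorname{im}\partial^*\oplus\ker\square$ reduces everything to showing the lowest homogeneous component of $\ka$ is killed by $\partial$, and that is exactly what the homogeneity bookkeeping in the generalized Bianchi identity (using regularity, so the quadratic and derivative terms sit in strictly higher degree) provides. Only a cosmetic point: with the usual grading conventions both $\partial$ and $\partial^*$ preserve homogeneity (so your parenthetical ``$\partial$ raises homogeneity, hence $\square$ preserves it'' is off as stated), but this does not affect the structure or validity of the argument.
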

If $\ka=0$, the parabolic geometry is called \emph{flat} (or \emph{locally
flat}).
Flat parabolic geometry of type $(G,P)$ is locally isomorphic to the homogeneous model 
$(G\to G/P,\mu)$.	

\subsection{Examples}			\label{2.2}
Here we focus on two classes of parabolic geometries which are often mentioned in the
sequel:

(1) 
An important family of examples is formed by $|1|$--graded parabolic geometries.
Any $|1|$--graded parabolic geometry is trivially regular and 
the main feature of any such geometry is that
the tangent bundle $TM$ has not got any nontrivial natural filtration.
Hence (up to one exception) the underlying structure on $M$ is just a classical first 
order $G_0$--structure.  
All the section \ref{3} deals with $|1|$--graded parabolic geometries, 
with almost Grassmannian structures in particular.

(2)
Another interesting examples are the \emph{parabolic contact geometries}, which 
are $|2|$--graded parabolic geometries with underlying contact structure.
Parabolic contact geometry corresponds to a \emph{contact grading} of a simple Lie 
algebra $\fg$, which is a grading 
$\fg=\fg_{-2}\oplus\fg_{-1}\oplus\fg_0\oplus\fg_1\oplus\fg_2$ such
that $\fg_{-2}$ is one dimensional and the Lie bracket $[\ ,\ ]:\fg_{-1}\x\fg_{-1}
\to\fg_{-2}$ is non--degenerate.
The filtration of $TM$ looks like $TM=T^{-2}M\supset T^{-1}M$ so that
$\D:=T^{-1}M$ is the contact distribution.
For regular parabolic contact geometries, the Levi bracket $\L:\D\x\D\to TM/\D$
is non--degenerate and the reduction of $\gr(TM)=(TM/\D)\oplus \D$ to the structure 
group $G_0$ corresponds to an additional structure on $\D$.

The best known examples of parabolic contact geometries are non--degenerate
partially integrable almost CR structures of hypersurface type where the additional
structure on $\D$ is an almost complex structure.
Another examples are the Lagrangean contact structures which are introduced in
\ref{4.4} in some detail.

\subsection{Weyl structures and connections}		\label{2.5}
Let $(\G\to M,\om)$ be a parabolic geometry of type $(G,P)$, 
let $\G_0=\G/P_+$ be the underlying $G_0$--bundle as in \ref{2.1}, and let
$\pi:\G\to\G_0$ be the canonical projection. 
A \emph{Weyl structure} of the parabolic geometry is a global 
smooth $G_0$--equivariant section $\si:\G_0\to\G$ of the projection $\pi$.
In particular, any Weyl structure provides a reduction of the principal bundle 
$\G\to M$ to the subgroup $G_0\subset P$.
For arbitrary parabolic geometry, Weyl structures always exist and any two 
Weyl structures $\si$ and $\hat \si$ differ by a $G_0$--equivariant mapping 
$\U:\ba_0\rightarrow \fp_+$ so that $\hat \si(u)= \si(u)\cdot \exp\U(u)$, 
for all $u \in \ba_0$. 
Since $\U$ is the frame form of a one--form on $M$, all Weyl structures form an affine 
space modelled over $\Om^1(M)$ and the relation above is simply written as 
$\hat \si = \si + \U$. 

Denote by $\om_i$ the $\fg_i$--component of the Cartan connection
$\om\in\Om^1(\G,\fg)$. 
The choice of the Weyl structure $\si$ defines the collection of
$G_0$--equivariant one--forms $\si^*\om_i\in\Om^1(\G_0,\fg_i)$.
The one--form $\si^*\om_0$ reproduces the fundamental vector fields of the
principal action of $G_0$ on $\G_0$, hence it defines a principal connection on $\G_0$
which we call the \emph{Weyl connection} of the Weyl structure $\si$.
The Weyl connection induces connections on all bundles associated to $\G_0$
and these are often called by the same name.
For any $i\ne 0$, the one--form $\si^*\om_i$ is strictly horizontal, hence it
descends to a one--form on $M$ with values in $\A_iM:=\A^iM/\A^{i+1}M$.
In particular, the whole negative part 
$\si^*\om_-=\si^*\om_{-k}\oplus\dots\oplus\si^*\om_{-1}$, which is called the
\emph{soldering form}, provides an identification of the tangent bundle $TM$ with the 
associated graded tangent bundle $\gr(TM)\cong\A_{-k}M\oplus\dots\oplus\A_{-1}M$.
The positive part $\si^*\om_+=\si^*\om_1\oplus\dots\oplus\si^*\om_k$ is called the
\emph{Rho--tensor} and denoted as $\Rho$.
The Rho--tensor is used to compare the Cartan connection $\om$ on $\G$ and the
principal connection on $\G$ extending the Weyl connection $\si^*\om_0$ from
the image of $\si:\G_0\to\G$.
By definition, $\Rho$ is a one--form on $M$ with values in $\A_1M\oplus\dots\oplus\A_kM$ 
and since this bundle is identified with $T^*M$, the Rho--tensor can be viewed as a section of
$T^*M\otimes T^*M$.

Among general Weyl structures, there are various specific subclasses.
We focus on the so--called normal Weyl structures which play some role in the
sequel. 
Normal Weyl structures are related to the notion of normal coordinates as follows.
Given a parabolic geometry $(p:\G\to M,\om)$ of type $(G,P)$ and a fixed
element $u\in\G$, the \emph{normal coordinates} at $x=p(u)$ is the local
diffeomorphism $\Phi_u$ from a neighborhood $U$ of $0\in\fg_-$ to a
neighborhood of $x\in M$, defined by $X\mapsto p(\Fl^{\om^{-1}(X)}_1(u))$.
(By $\om^{-1}(X)$ we denote the constant vector field on $\G$ corresponding to
$X$.)
Now, over the image $\Phi_u(U)\subset M$, there is a unique $G_0$--equivariant section 
$\si_u:\G_0\to \G$ such that $\Fl^{\om^{-1}(U)}_1(u)\subset\si_u(\G_0)$, which we
call the \emph{normal Weyl structure} at $x$.
Although the normal Weyl structure is indexed by $u\in\G$, it obviously
depends only on the orbit of $u\in p^{-1}(x)$ by  the action of $G_0$. 
If $\na$ and $\Rho$ is the corresponding affine connection and Rho--tensor,
respectively, then the normal Weyl structure $\si_u$ is characterized by the property
that for all $k\in\Bbb N$ the symmetrization of
$(\na_{\xi_k}\dots\na_{\xi_1}\Rho)(\xi_0)$ over all $\xi_i\in TM$ vanishes at
$x=p(u)$. 
Hence, in particular, $\Rho(x)=0$, cf.\ \cite[Theorem 3.16]{WS}.

\subsection{Automorphisms and symmetries}		\label{2.3}
An \emph{automorphism} of Cartan geometry $(\G\to M,\om)$ of type $(G,P)$ is a 
principal bundle automorphism $\va:\G\to\G$ such that $\va^*\om=\om$. 
It is well known that all automorphisms of (a connected component of) the homogeneous 
model $(G\to G/P,\mu)$ are just the left multiplications by elements of $G$.
Any $g\in G$ induces a base map $\ell_g:G/P\to G/P$ and it turns out that two
elements of $G$ have got the same base map if and only if they differ by an
element from the \emph{kernel} $K$ of  the pair $(G,P)$, which is the
maximal normal subgroup of $G$ contained in $P$.
(If $K$ is trivial then $G$ acts effectively on $G/P$.)
Moreover, the same characterization holds also for general Cartan geometries,
\cite[chapters 4 and 5]{S}.

In the cases of parabolic geometries, the kernel $K$ is always discrete 
and very often finite if not trivial.
An automorphism $\va:\G\to\G$ of parabolic geometry is then uniquely
determined by its base map $\underline\va:M\to M$  up to a smooth equivariant
function $\G\to K$ which has to be constant over connected components of $M$.

\begin{def*}
Let $(\ba \rightarrow M,\om)$ be a regular $|k|$--graded parabolic geometry, 
let $TM=T^{-k}M\supset\dots\supset T^{-1}M$ be the corresponding filtration 
of the tangent bundle, and let $x\in M$ be a point.
A \emph{local symmetry} of the parabolic geometry centered at $x$ 
is a locally defined diffeomorphism $s_x$ of a neighborhood of $x$ such that:
\begin{itemize} 
\item[(i)] $s_x(x)=x$, 
\item[(ii)] $T_x s_x|_{T_x^{-1}M}=-\operatorname{id}_{T_x^{-1}M}$, 
\item[(iii)] $s_x$ is covered by an automorphism of the parabolic geometry. 
\end{itemize}
If the local symmetry can be extended to a global symmetry on $M$, we just
speak about \emph{symmetry}.
The parabolic geometry is called (\emph{locally}) \emph{symmetric} if there is a
(local) symmetry at each point $x \in M$.
\end{def*}

Note that for $|1|$--graded parabolic geometries the restriction in the 
condition (ii) above is actually superfluous since $T^{-1}M=TM$.
Hence it can be shown that $s_x$ is involutive and $x$ is an isolated fixed point.
In this view, the definition above reflects the classical notion of affine 
locally symmetric spaces.
The main difference to the classical issues is that parabolic geometries are
not structures of first order, hence, in particular, the conditions above do not 
determine the symmetry uniquely.

Note also that the condition (ii) cannot be extended to the whole $TM$ in general:
Any symmetry is by definition covered by an automorphism of the parabolic
geometry, hence it has to preserve the underlying structure.
For instance, consider a parabolic contact geometry introduced in example
\ref{2.2}(2).
In particular, the underlying structure comprise of the contact distribution
$\D\subset TM$ and the non--degenerate Levi bracket $\L:\D\x\D\to TM/\D$.
If there was a map $s$ satisfying (i), (iii), and $T_x s=-\id_{T_xM}$, then 
for any $\xi,\eta\in\D_x$ it would hold
$s(\L(\xi,\eta)) =\L(s(\xi),s(\eta))=\L(\xi,\eta)$
and, simultaneously, 
$s(\L(\xi,\eta))=-\L(\xi,\eta)$,
which would contradict the non--degeneracy of $\L$.

\subsection{Symmetries of homogeneous models} 	\label{2.4}
Let $(G\rightarrow G/P,\mu)$ be the homogeneous model of a parabolic geometry
of type $(G,P)$ and let $G/P$ be connected.
As we mentioned in the beginning of \ref{2.3}, 
all automorphisms of the homogeneous model are just  the left 
multiplications by elements of $G$.
Next, an analog of the Liouville theorem states that any local automorphism
can be uniquely extended to a global one.
Hence if the homogeneous model is locally symmetric then it is symmetric.
By the transitivity of the action of $G$ on $G/P$ and the above
characterization of the automorphisms, in order to decide whether the homogeneous model 
is symmetric, it suffices to find a symmetry at the origin.
Due to the identification $T(G/P)\cong G\x_P(\fg/\fp)$ as in \ref{2.1}, the
previous task is equivalent to find an element in $P$ which acts as $-\id$ on
$\fg^{-1}/\fp\subset\fg/\fp$.
Since $P=G_0\rtimes P_+$ and $P_+$ acts trivially on $\fg^{-1}/\fp$, one is
actually looking for an element of $G_0$ acting as $-\id$ on $\fg_{-1}$.
Altogether, we have got the following general statement:
\begin{prop*}
  All symmetries of the homogeneous model $(G \rightarrow G/P, \mu)$ of a
  parabolic geometry of type $(G,P)$ centered at any point are parametrized 
  by the elements $g_0 \exp Z \in P$, where $Z\in\fp_+$ is arbitrary and 
  $g_0 \in G_0$ such that  $\Ad_{g_0}|_{\fg_{-1}}=-\id|_{\fg_{-1}}$. 
  In particular, if there is one symmetry at a point then there is an infinite
  amount of them.
\end{prop*}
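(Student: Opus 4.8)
The plan is to follow the chain of reductions sketched in the paragraph preceding the statement, and simply verify that each step is a genuine equivalence rather than merely an implication. First I would recall that, by the discussion opening \ref{2.3} together with \cite{S}, every automorphism of the homogeneous model $(G\to G/P,\mu)$ is a left multiplication $\ell_g$ by some $g\in G$, and two such have the same base map on $G/P$ precisely when they differ by an element of the kernel $K\subset P$; since condition (iii) in the definition of local symmetry asks only for the existence of a covering automorphism, a symmetry centered at a point $x$ is the same thing as a base map $\ell_g$ satisfying (i) and (ii) at $x$. By the Liouville-type extension property also mentioned in \ref{2.4}, it is harmless to work with global symmetries, and by $G$-equivariance of the whole setup it suffices to classify the symmetries centered at the origin $o=eP$.

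Next I would unravel conditions (i) and (ii) at the origin. Condition $s_x(o)=o$ for $s_x=\ell_g$ means $gP=P$, i.e.\ $g\in P$; write $g=g_0\exp Z$ with $g_0\in G_0$ and $Z\in\fp_+$ using $P=G_0\rtimes P_+$. Under the canonical identification $T_o(G/P)\cong\fg/\fp$ coming from $\mu$ (as in \ref{2.1}), the tangent map $T_o\ell_g$ is exactly the action of $\Ad_g$ on $\fg/\fp$, and the subspace $T_o^{-1}(G/P)$ corresponds to $\fg^{-1}/\fp$. So condition (ii) says $\Ad_g$ acts as $-\id$ on $\fg^{-1}/\fp$. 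Here I would use that $P_+=\exp\fp_+$ acts trivially on the associated graded of $\fg/\fp$, and in particular on $\fg^{-1}/\fp\cong\fg_{-1}$ (this is the same fact invoked in \ref{2.4}): hence $\Ad_{g_0\exp Z}$ and $\Ad_{g_0}$ induce the same map on $\fg_{-1}$, so (ii) is equivalent to $\Ad_{g_0}|_{\fg_{-1}}=-\id|_{\fg_{-1}}$, with $Z\in\fp_+$ completely unconstrained.

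Putting these together gives exactly the asserted parametrization: the symmetries at the origin are the $\ell_g$ with $g=g_0\exp Z$, $Z\in\fp_+$ arbitrary and $g_0\in G_0$ with $\Ad_{g_0}|_{\fg_{-1}}=-\id|_{\fg_{-1}}$; transporting by $G$ gives the same description at any point. For the final sentence, I would note that once one such $g_0$ exists, varying $Z$ over the nontrivial abelian-type group $\fp_+$ (nonzero in any genuine parabolic situation) already yields infinitely many distinct symmetries, since distinct $Z$ give distinct elements of $P$ and, modulo the discrete kernel $K$, distinct base maps; alternatively one may multiply $g_0$ by elements of the centralizer of $\fg_{-1}$ in $G_0$.

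I do not expect a serious obstacle here — the argument is essentially bookkeeping — but the one point that needs a little care is the claim that distinct choices of $Z$ (or of $g_0$) really produce distinct symmetries of the geometry rather than collapsing under the kernel $K$; this is where one uses that $K$ is discrete and, being normal in $G$ and contained in $P$, meets the ``$-\id$ on $\fg_{-1}$'' coset in a set that is small compared to all of $P_+$. A second minor subtlety is making sure condition (ii), which is stated for the filtration component $T^{-1}M$ rather than all of $TM$, is correctly matched with $\fg^{-1}/\fp$ and not with $\fg/\fp$; the remark after the definition of local symmetry, about why (ii) cannot be imposed on all of $TM$, is exactly the reason one must be careful here.
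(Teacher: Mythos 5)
Your argument is correct and follows essentially the same route as the paper's own reasoning (the discussion preceding the Proposition): automorphisms of the model are left multiplications, fixing the origin forces $g\in P=G_0\rtimes P_+$, and since $P_+$ acts trivially on $\fg^{-1}/\fp\cong\fg_{-1}$ only the $G_0$--part is constrained by condition (ii), leaving $Z\in\fp_+$ free. Your extra care about distinct choices of $Z$ not collapsing modulo the discrete kernel $K$ is a small refinement the paper leaves implicit, and it is handled correctly.
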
 

It is usually a simple exercise to find all elements from $G_0$ with
the property as above.
Note that different choices of the pair of Lie groups $(G,P)$ with the Lie algebras 
$\fp\subset\fg$ may lead to different amount of such elements.
This actually corresponds to the cardinality of the kernel $K$, as defined in \ref{2.3}.

\section{$|1|$--graded and Grassmannian locally symmetric spaces} \label{3}
Firstly we collect the facts on symmetries which hold for general $|1|$--graded
parabolic geometries.
Then we focus on Grassmannian locally symmetric spaces and provide a
discussion which is specific in that case.
In any case, the parabolic geometry in question is $|1|$--graded, so
the tangent map to a possible symmetry at $x\in M$ acts as $-\id$ on all of $T_xM$.
Hence the following fact is obvious and often used below:
\begin{lemma}			\label{lema}
  For a $|1|$--graded parabolic geometry on $M$, tensor field of odd degree which 
  is invariant with respect to a symmetry at $x\in M$ vanishes at $x$.
\end{lemma}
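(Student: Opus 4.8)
The plan is to unwind the definitions and reduce the claim to the pointwise statement about a tensor on $M$. By assumption the parabolic geometry is $|1|$--graded, so $T^{-1}M = TM$ and any symmetry $s_x$ at $x$ satisfies $T_x s_x = -\id_{T_xM}$. If $\Phi$ is a tensor field of type $(r,s)$ which is invariant under $s_x$, then functoriality of the natural bundle operations gives, at the point $x$,
\begin{equation*}
\Phi_x = (s_x^*\Phi)_x = (T_x s_x)^{\otimes}\cdot\Phi_x,
\end{equation*}
where $(T_x s_x)^{\otimes}$ denotes the induced action of $T_x s_x$ on the appropriate tensor power of $T_xM$ and $T_x^*M$. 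Since $T_x s_x = -\id$, this induced action is multiplication by $(-1)^{r+s}$, i.e. by $(-1)^{\deg\Phi}$. So the first step is just to make precise that "invariant tensor field of odd degree" means $\deg\Phi = r+s$ odd and that the induced action on the fibre is the scalar $(-1)^{\deg\Phi}$.

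From here the argument is immediate: when $\deg\Phi$ is odd we get $\Phi_x = -\Phi_x$, hence $\Phi_x = 0$. I would phrase this cleanly by noting that the value $\Phi_x$ lies in the vector space $\bigotimes^r T_xM \otimes \bigotimes^s T_x^*M$ on which $-\id_{T_xM}$ acts as $(-1)^{r+s}\id$; invariance forces $\Phi_x$ into the $(+1)$--eigenspace of this action, which is $\{0\}$ precisely when $r+s$ is odd.

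There is essentially no obstacle here — the statement is a formal consequence of $T_x s_x$ being the negative identity together with the naturality of tensorial bundles, and the only thing that needs a word of care is the bookkeeping of how $T_x s_x$ acts on a mixed tensor (covariant slots contribute the inverse transpose, but for a scalar multiple of the identity this is still the same scalar on each covariant slot, so the total factor is $(-1)^{r+s}$). I would state this in one or two sentences and conclude. The lemma is recorded precisely because it will be invoked repeatedly in Section~\ref{3}, so brevity is appropriate.
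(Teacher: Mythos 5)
Your argument is correct and is exactly the reasoning the paper has in mind: since the geometry is $|1|$--graded, $T_xs_x=-\id_{T_xM}$, so invariance forces the value at $x$ into the $(+1)$--eigenspace of an action by $(-1)^{r+s}$, which is zero for odd degree; the paper simply records this as obvious without writing out the eigenvalue bookkeeping. No gaps, same approach.
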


\subsection{General restrictions}		\label{3.2}
Following \cite[section 4]{WS}, we start with a bit of notation we use below.
Let $(\G\to M,\om)$ be a normal $|1|$--graded parabolic geometry,
let $\ka$ be the Cartan curvature, and let $\ka_H$ be its harmonic curvature.
Let $\si:\G_0\to\G$ be a Weyl structure, and let $\tau_i:=\si^*\om_i$ be the
corresponding Weyl forms as in \ref{2.5}.
Let us consider the curvature
$d\tau+\frac12[\tau,\tau]=\si^*\ka\in\Om^2(\G_0,\fg)$ and its
decomposition $T+W+Y$ according to the values in $\fg_{-1}\oplus\fg_0\oplus\fg_1=\fg$.
As before, $T,W$, and $Y$ is represented by a two--form on $M$ with values in
$\A_{-1}M\cong TM$, $\A_0M\cong \operatorname{End}_0(TM)$, and $\A_1M\cong T^*M$,
respectively.
By definition, $T=d\tau_{-1}+[\tau_{-1},\tau_0]$, hence it coincides with the
torsion of the affine connection $\na$ on $M$ induced by the Weyl connection
$\tau_0$. 
By lemma \ref{2.1}, this further coincides with the  homogeneous component of degree
one of the harmonic curvature $\ka_H$, hence it is independent of the choice of 
Weyl structure. 
Similarly, $W=d\tau_0+\frac12[\tau_0,\tau_0]+[\tau_{-1},\tau_1]$, 
where the first two summands represent just the curvature $R$ of $\na$.
Since $\tau_1=\Rho$, the last summand is rewritten as $\partial\Rho$, where
$(\partial\Rho)(\xi,\eta)=\{\xi,\Rho(\eta)\}+\{\Rho(\xi),\eta\}$, where $\{\ ,\
\}$ is the algebraic bracket on $\A M$ given by the Lie bracket in $\fg$.
Altogether,
\begin{equation}		\label{eq1}
  W=R+\partial\Rho
\end{equation}
and we call $W$ the \emph{Weyl curvature}.
If $T=0$ then  $W$ coincides by lemma \ref{2.1} with 
the homogeneous component of $\ka_H$ of degree two and so it is invariant 
with respect to the change of Weyl structure.

As an immediate application of lemma \ref{lema}  we have got the following:
\begin{prop*}
  If a $|1|$--graded parabolic geometry is locally symmetric then it is
  torsion--free. 
  In particular, any underlying Weyl connection is torsion--free.
\end{prop*}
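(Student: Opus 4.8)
The plan is to use Lemma \ref{lema} together with the identification, recalled just above the statement, between the torsion $T$ of any Weyl connection and the degree--one homogeneous component of the harmonic curvature $\ka_H$. First I would fix a point $x\in M$ at which a local symmetry $s_x$ exists, and recall that $s_x$ is, by condition (iii) of the definition, covered by an automorphism $\va$ of the parabolic geometry. Since the torsion $T$ (equivalently, the degree--one part of $\ka_H$) is a natural object of the geometry, it is preserved by $\va$; hence the tensor field $T$ is invariant with respect to the symmetry $s_x$. On the other hand, $T$ is a two--form with values in $TM$, i.e.\ a section of $\wedge^2 T^*M\otimes TM$, which as a tensor field has odd total degree (degree three). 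Applying Lemma \ref{lema} to this tensor field then forces $T(x)=0$.

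Next I would promote this pointwise vanishing to vanishing on an open set, and then globally. The key input here is Lemma \ref{2.1} (the ``Lemma*'' on harmonic curvature): for a regular normal parabolic geometry, the lowest non--zero homogeneous component of $\ka$ coincides with the corresponding component of $\ka_H$. In the $|1|$--graded torsion question, $T$ is exactly the degree--one component of $\ka_H$, and it is the potentially lowest one. If the geometry is locally symmetric, the argument of the previous paragraph shows $T(x)=0$ at \emph{every} $x\in M$, so $T\equiv 0$ identically. But then the lowest non--zero homogeneous component of $\ka$ has degree at least two, so $\ka$ takes values in $\wedge^2\fp_+\otimes\fp$ — that is precisely the definition of torsion--freeness of the parabolic geometry. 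Thus the parabolic geometry itself is torsion--free.

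Finally, for the ``in particular'' clause: once $\ka$ is torsion--free, for any Weyl structure $\si$ the $\fg_{-1}$--component $T$ of $\si^*\ka$ vanishes (this component is the image of $\ka$ under projection to $\wedge^2\fp_+\otimes\fg_{-1}$, which is zero for a torsion--free geometry, and this projection is insensitive to the choice of $\si$). Since $T=d\tau_{-1}+[\tau_{-1},\tau_0]$ is exactly the torsion of the affine connection $\na$ induced by the Weyl connection $\tau_0=\si^*\om_0$, every underlying Weyl connection is torsion--free.

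The main obstacle — really the only subtlety — is being careful about which ``torsion'' is invariant under the symmetry and why. The torsion of a \emph{particular} Weyl connection need not a priori be preserved by the automorphism $\va$, since $\va$ need not fix the chosen Weyl structure. The clean way around this is to argue at the level of $\ka_H$ (a natural object, manifestly $\va$--invariant) and to use the already--recorded fact that for $|1|$--graded geometries the degree--one part of $\ka_H$ equals the Weyl--structure--independent torsion $T$. I would state this identification explicitly at the start of the proof, citing the discussion preceding the proposition and Lemma \ref{2.1}, so that the invariance step is unambiguous; everything else is then a direct application of Lemma \ref{lema}.
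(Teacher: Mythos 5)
Your proof is correct and follows essentially the same route as the paper: the torsion $T$, being the degree--one component of $\ka_H$ and hence Weyl--structure--independent, is invariant under each local symmetry, so Lemma \ref{lema} kills it pointwise, and the lemma of \ref{2.1} then upgrades the vanishing of this lowest harmonic component to torsion--freeness of $\ka$. The paper records this as an immediate application of Lemma \ref{lema} using exactly the identification of $T$ with the homogeneity--one part of $\ka_H$ stated just before the proposition.
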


Now we are going to find some information on the curvature of 
symmetric $|1|$--graded parabolic geometries.
If $\va$ is an automorphism of the parabolic  geometry then for 
arbitrary Weyl structure $\hat \si$ the pullback $\va^* \hat \si$ is again 
Weyl structure, hence $\va^*\hat \si =\hat \si + \U$, for some uniquely given 
one--form $\U$.
If $\va$ in addition covers some symmetry at $x$ then one checks that the 
Weyl structure $\si:=\hat \si + {1 \over 2} \U$ satisfies $\va^* \si = \si$ in the 
fiber over $x$. 
(Restricted to the fiber over $x$, the definition of $\si$ does not depend on 
$\hat \si$.) 
Next, let $\bar\si$ be the normal Weyl structure at $x$ which is uniquely determined by
$\si$ over $x$.
Since $\va^*\bar\si$ is again normal and by construction
$\va^*\bar\si=\bar\si$ over $x$, it has to coincide with $\bar\si$ on its domain.
Altogether, \cite[sections 9 and 10]{dis}:  
\begin{lemma*}
  Let $\va$ be an automorphism of $|1|$--graded parabolic geometry which covers 
  a local symmetry at a point $x$. Then
  \begin{enumerate}
  \item there is a Weyl structure which is invariant under $\va$ over the point $x$,
  \item there is a unique normal Weyl structure which is invariant under $\va$
  	over some neighborhood of $x$.   
  \end{enumerate}
\end{lemma*}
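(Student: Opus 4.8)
The plan is to establish the two assertions in sequence, following the observations sketched in the paragraph preceding the statement. First I would make precise the passage from an arbitrary automorphism $\va$ covering a symmetry at $x$ to a $\va$-invariant Weyl structure over $x$. Start from any Weyl structure $\hat\si$ and note that, since $\va$ preserves the Cartan connection and is a principal bundle automorphism, $\va^*\hat\si$ is again a $G_0$-equivariant section of $\pi:\G\to\G_0$, i.e.\ a Weyl structure. Because all Weyl structures form an affine space modelled on $\Om^1(M)$, there is a unique one-form $\U\in\Om^1(M)$ with $\va^*\hat\si=\hat\si+\U$. Put $\si:=\hat\si+\frac12\U$. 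Then a direct computation using the affine structure and the cocycle-type behaviour of $\va$ (namely $\va^*(\hat\si+\tfrac12\U)=\va^*\hat\si+\tfrac12(\underline\va)^*\U$, and the fact that $(\underline\va)^*\U=-\U$ over $x$, which follows from $T_xs_x=-\id$ together with $s_x(x)=x$) gives $\va^*\si=\si$ over the fibre $p^{-1}(x)$. I would emphasize that, although $\si$ a priori depends on the auxiliary choice $\hat\si$, its restriction to $p^{-1}(x)$ does not: two choices differing by $\U_0$ change $\U$ by $\U_0-(\underline\va)^*\U_0$, which vanishes at $x$. This proves (1).

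For (2), I would invoke the normal Weyl structure machinery from \ref{2.5}. Given the invariant-over-$x$ Weyl structure $\si$ from part (1), pick any $u\in p^{-1}(x)$ lying in the image of $\si$; the normal coordinates $\Phi_u$ and the associated normal Weyl structure $\si_u$ are determined by the $G_0$-orbit of $u$, and since $\si(\G_0)\cap p^{-1}(x)$ is exactly one such $G_0$-orbit, the normal Weyl structure $\bar\si:=\si_u$ is intrinsically attached to $\si$ near $x$; denote its domain by some neighbourhood $V$ of $x$. Now apply $\va$: since $\va$ is a Cartan geometry automorphism, it carries constant vector fields $\om^{-1}(X)$ to themselves and hence intertwines the flows $\Fl^{\om^{-1}(X)}_t$, so it maps normal coordinates to normal coordinates and normal Weyl structures to normal Weyl structures. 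Concretely $\va^*\bar\si$ is the normal Weyl structure determined by the $G_0$-orbit of $\va^{-1}(u)$ over the point $\underline\va^{-1}(x)=x$. But over $x$ we have $\va^*\si=\si$ from part (1), so $\va^{-1}(u)$ lies in the same $G_0$-orbit inside $p^{-1}(x)$ as $u$; hence $\va^*\bar\si$ and $\bar\si$ agree over $x$. Two normal Weyl structures that agree over a point coincide on the intersection of their domains (a normal Weyl structure is uniquely determined by the choice of $u$ up to $G_0$, hence by its value over $x$), so $\va^*\bar\si=\bar\si$ on a neighbourhood of $x$. Uniqueness of such an invariant normal Weyl structure is immediate from the same rigidity: any invariant normal Weyl structure restricts over $x$ to a $\va$-invariant element of $p^{-1}(x)/G_0$, which by part (1) and the affine description is forced.

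The main obstacle I anticipate is the bookkeeping in part (1): verifying carefully that $\va^*$ acts on the affine space of Weyl structures by an affine map whose linear part is $(\underline\va)^*$ on $\Om^1(M)$, and pinning down that $(\underline\va)^*\U=-\U$ holds \emph{exactly} in the fibre over $x$ (not just to first order), so that the averaging trick $\si=\hat\si+\tfrac12\U$ genuinely yields $\va^*\si=\si$ over $x$. This requires being a little delicate about what ``invariant over $x$'' means---invariance of the restriction $\si|_{p^{-1}(x)}:\G_0|_x\to\G|_x$ as an equivariant map---and using $s_x(x)=x$ to ensure $\va$ preserves $p^{-1}(x)$ setwise. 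The rest, especially part (2), is then a fairly formal consequence of the functoriality of normal coordinates under Cartan automorphisms, for which I would simply cite \cite[Theorem 3.16]{WS} and the discussion in \ref{2.5}. Throughout I would keep the argument at the level detailed in \cite[sections 9 and 10]{dis}, since the statement is explicitly attributed there.
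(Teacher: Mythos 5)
Your proposal is correct and follows essentially the same route as the paper: the averaging trick $\si=\hat\si+\tfrac12\U$ using $(\underline\va)^*\U=-\U$ at $x$ for part (1), and then the observation that the normal Weyl structure determined by $\si$ over $x$ is preserved because $\va^*\bar\si$ is again normal and agrees with $\bar\si$ over $x$. Your added justification of uniqueness (a normal Weyl structure is determined by the $G_0$-orbit over $x$, and the affine action with linear part $-\id$ has a unique fixed orbit) is a detail the paper defers to \cite{dis}, but it is consistent with that argument.
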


Let $\na$ be the affine connection corresponding to the normal Weyl
structure on a neighborhood of $x$ as above and let $T,R,W$, and $\Rho$  be its torsion,
curvature, Weyl curvature, and Rho--tensor, respectively.
By construction, the connection $\na$ is invariant with respect to the local
symmetry at $x$ and by the previous Proposition, $T=0$.
Similarly, $\na W$ is also invariant under the symmetry at $x$, however, it is a tensor 
field of degree five which vanishes at $x$ by lemma \ref{lema}.
Since $\na$ is normal at $x$, $\Rho$ vanishes at $x$, hence from the equation \eqref{eq1}
we conclude that also $\na R=0$ at $x$:
\begin{thm*}
  Suppose there is a local symmetry $s_x$ of a $|1|$--graded parabolic geometry
  centered at $x$.
  Then, on a neighborhood of $x$, there exists a torsion--free Weyl connection 
  $\na$  which is invariant under $s_x$ and whose Rho--tensor vanishes at $x$.
  Consequently, $\na R$ vanishes at $x$. 
\end{thm*}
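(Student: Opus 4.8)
The plan is to assemble the theorem from the two preceding results, the Lemma just above (existence of an invariant normal Weyl structure near $x$) and the Proposition on torsion--freeness, together with Lemma \ref{lema} applied to the appropriate natural tensor fields. First I would let $\na$ be the affine (Weyl) connection corresponding to the normal Weyl structure $\bar\si$ produced by part (2) of the Lemma; this is defined on a neighborhood of $x$ and is invariant under (the automorphism covering) $s_x$. Since $\bar\si$ is a normal Weyl structure, the characterization recalled in \ref{2.5} gives $\Rho(x)=0$. By the Proposition of \ref{3.2}, any underlying Weyl connection of a locally symmetric $|1|$--graded geometry is torsion--free, so $T=0$ identically; in particular $\na$ is torsion--free. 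That already yields the first two assertions.

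For the last assertion, $\na R$ vanishes at $x$, the idea is to play the invariance of $\na$ against Lemma \ref{lema} via the identity \eqref{eq1}, $W=R+\partial\Rho$. Because $\na$ is invariant under $s_x$, so is its Weyl curvature $W$ and so is the covariant derivative $\na W$. Now $\na W$ is a tensor field of odd degree (it has five arguments: the object $W$ is a $TM$--valued two--form with a further $T^*M$--factor in $\operatorname{End}_0(TM)$, plus the derivative slot), hence by Lemma \ref{lema} it vanishes at $x$. Differentiating \eqref{eq1} gives $\na W=\na R+\na(\partial\Rho)$. The second term, evaluated at $x$, involves $\partial$ applied to $\na\Rho$ together with terms containing $\Rho$ itself; since $\Rho(x)=0$ and $\partial$ is the algebraic (zeroth--order, $G_0$--equivariant) operator induced by the Lie bracket, $(\na(\partial\Rho))(x)=(\partial\,\na\Rho)(x)$. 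But $\na\Rho$ is again a tensor of odd degree (three arguments: two for $\Rho$ plus the derivative slot) which is invariant under $s_x$, so Lemma \ref{lema} forces $(\na\Rho)(x)=0$, whence $(\na(\partial\Rho))(x)=0$. Combining, $(\na R)(x)=(\na W)(x)-(\na(\partial\Rho))(x)=0$.

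I would write this compactly: invariance of $\na$ propagates to $W$, $\Rho$, $\na W$, $\na\Rho$; parity via Lemma \ref{lema} kills the odd ones at $x$, namely $\na W$ and $\na\Rho$ and also $\Rho$ itself; and \eqref{eq1} differentiated then isolates $\na R$ at $x$. The one point that needs a little care — and which I expect to be the main obstacle, or at least the only non--formal step — is justifying that the $\Rho$--dependent terms in $\na(\partial\Rho)$ really drop out at $x$, i.e. that one may commute $\na$ past the algebraic operator $\partial$ modulo terms proportional to $\Rho(x)=0$. This is exactly the statement that $\partial$ is parallel for $\na$ up to curvature-type corrections, or more simply that $\partial$ is a natural bundle map so $\na(\partial\Rho)=\partial(\na\Rho)$ plus a term in which $\Rho$ appears undifferentiated; since we only evaluate at $x$ and $\Rho(x)=0$, this suffices. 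Everything else is a direct invocation of the Lemma, the Proposition, and the normality characterization of $\bar\si$ from \ref{2.5}.
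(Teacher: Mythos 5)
Your argument is correct and takes essentially the same route as the paper: the invariant normal Weyl structure from part (2) of the Lemma, torsion--freeness from the Proposition, $\Rho(x)=0$ by normality of $\bar\si$, and Lemma \ref{lema} applied to the odd--degree invariant tensor $\na W$, combined with \eqref{eq1}, to isolate $\na R$ at $x$. Your extra care with the term $\na(\partial\Rho)$ --- commuting $\na$ past the algebraic operator $\partial$ and then killing $(\na\Rho)(x)$ by the same parity argument --- merely makes explicit a step the paper leaves implicit, so no substantive difference remains.
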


\subsection{Almost Grassmannian structures}		\label{3.3}
The notion of almost Grassmannian structure on a smooth manifold generalizes
the geometry of Grassmannians.
The \emph{Grassmannian of type $(p,q)$} is the space
$\Gr(p,\R^{p+q})$ of $p$--dimensional linear subspaces in the
real vector space $\R^{p+q}$.
It is well known that, for any $E\in\Gr(p,\R^{p+q})$,  the tangent space of 
$\Gr(p,\R^{p+q})$ in $E$ is identified with $E^*\otimes(\R^{p+q}/E)$, the space of 
linear maps from $E$ to the quotient.
In particular, the dimension of $\Gr(p,\R^{p+q})$ is $pq$.
Note that $\Gr(1,\R^{1+q})$ is the real projective space $\R\Bbb P^q$, so
we always assume $p>1$ hereafter.
Under this assumption, the Grassmannian $\Gr(p,\R^{p+q})$ may be considered as
the space of $(p-1)$--dimensional projective subspaces in $\R\Bbb
P^{p+q-1}=\P(\R^{p+q})$. 
The Lie group $\hat G:=PGL(p+q,\R)$ acts transitively (and effectively) on 
$\Gr(p,\R^{p+q})$ and the stabilizer of a fixed element is the parabolic
subgroup $\hat P$ as described below.
The Grassmannian of type $(p,q)$ is then the homogeneous model of the
parabolic geometry of type $(\hat G,\hat P)$.

The Lie algebra of $\hat G$ is $\hat\fg=\frak{sl}(p+q,\R)$ and let us consider
its grading which is schematically described by the block decomposition
$$ 
  \pmat{\hat \fg_0 & \hat \fg_1 \\ \hat \fg_{-1} & \hat \fg_0}
$$
with blocks of sizes $p$ and $q$ along the diagonal. 
In particular, $\hat \fg_{-1}\cong \R^{p*}\otimes\R^q$, $\hat \fg_0 \cong 
\frak s(\frak{gl}(p,\R) \oplus \frak{gl}(q,\R))$, and $\hat \fg_{1}\cong 
\R^p\otimes\R^{q*}$.
The parabolic subgroup $\hat P\subset\hat G$ is represented by block upper 
triangular matrices with the Lie algebra $\hat \fp = \hat \fg_0 \oplus\hat\fg_1$,
the subgroup $\hat G_0$ corresponds then to the block diagonal matrices in $\hat
P$.

An \emph{almost Grassmannian structure of type $(p,q)$} on a smooth
manifold $M$ is defined to be a $|1|$--graded parabolic geometry of type 
$(\hat G,\hat P)$ where the groups are as above.
The underlying structure on $M$ is equivalent to the choice of auxiliary
vector bundles $E\to M$ and $F\to M$ of rank $p$ and $q$, respectively, and an
isomorphism $E^*\otimes F\to TM$.
Note that a different choice of the Lie group to the Lie algebra
$\hat\fg=\frak{sl}(p+q,\R)$ gives rise to an additional structure on $M$.
In particular, the usual choice for $\hat G$ to be $SL(p+q,\R)$ leads to a preferred 
trivialisation of $\wedge^pE\otimes \wedge^qF$ which is often supposed in the
literature. 
In contrast to the previous choice, this group has got a non--trivial 
center provided $p+q$ is even.

\subsection{Grassmannian locally symmetric spaces}	\label{3.4}
When we speak about a \emph{Grassmannian (locally) symmetric space}, we mean a smooth 
manifold with an almost Grassmannian structure which is (locally) symmetric in 
the sense of \ref{2.3}.
For technical reasons we always assume the almost Grassmannian structure is
represented by a normal parabolic geometry of type $(\hat G,\hat P)$, which is
uniquely determined by the underlying structure up to isomorphism.
Note that by Proposition \ref{3.2}, any Grassmannian locally symmetric space
admits  a torsion--free affine connection preserving the structure, hence the
almost Grassmannian structure is actually Grassmannian, i.e.\ it is integrable in 
the sense of $G$--structures.

According to Proposition \ref{2.4}, it is an easy exercise to decide whether
the homogeneous model $\hat G/\hat P$ is symmetric or not. 
A direct calculation shows that, \cite[section 7]{dis}:
\begin{prop*}
  The homogeneous model of almost Grassmannian structures of type $(p,q)$ is
  always symmetric.
\end{prop*}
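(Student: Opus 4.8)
The plan is to invoke Proposition \ref{2.4}, which reduces the question of whether the homogeneous model $\hat G/\hat P$ is symmetric to a purely algebraic one: it suffices to exhibit a single element $g_0\in\hat G_0$ whose adjoint action on $\hat\fg_{-1}$ is $-\id$. Since $\hat\fg_{-1}\cong\R^{p*}\otimes\R^q$ sits in the lower-left block, and $\hat G_0$ consists of block-diagonal matrices $\mathrm{diag}(A,B)$ with $A\in GL(p,\R)$, $B\in GL(q,\R)$ (subject to the determinant condition defining $\hat G=PGL(p+q,\R)$, i.e.\ modulo scalars), the adjoint action on the off-diagonal block is $X\mapsto BXA^{-1}$. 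So the task is to find block-diagonal $\mathrm{diag}(A,B)$, well defined up to an overall scalar, with $BXA^{-1}=-X$ for all $X$, equivalently $B=-A$ as scalar multiples after rescaling.

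The concrete computation I would carry out: take $A=\id_p$ and $B=-\id_q$, so $g_0$ is represented by the block-diagonal matrix $\mathrm{diag}(\id_p,-\id_q)\in GL(p+q,\R)$. Then for $X\in\hat\fg_{-1}$ we get $\Ad_{g_0}(X)=(-\id_q)\,X\,(\id_p)^{-1}=-X$, as required; and analogously $\Ad_{g_0}$ acts as $-\id$ on $\hat\fg_1\cong\R^p\otimes\R^{q*}$ via $Y\mapsto \id_p\,Y\,(-\id_q)^{-1}=-Y$, while it fixes $\hat\fg_0$ pointwise. Passing to $\hat G=PGL(p+q,\R)$, the class of $\mathrm{diag}(\id_p,-\id_q)$ lies in $\hat G_0$ (it is block-diagonal, hence preserves the grading) and the formula above is independent of the choice of representative, since rescaling $A$ and $B$ by a common factor does not change $BXA^{-1}$. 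By Proposition \ref{2.4}, this single $g_0$ already produces a symmetry at the origin, and by homogeneity together with the characterization of automorphisms of the homogeneous model recalled in \ref{2.3}--\ref{2.4}, one obtains a symmetry at every point; hence $\hat G/\hat P$ is symmetric.

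There is essentially no obstacle here — the statement is exactly the "easy exercise" advertised before it — but the one point deserving a word of care is that we are working with $\hat G=PGL(p+q,\R)$ rather than $SL(p+q,\R)$, so I would note explicitly that the candidate element is only defined up to scalar and check that this ambiguity is harmless (it is, by the computation above). I would also remark, in the spirit of the closing sentence of \ref{2.4}, that had we taken $\hat G=SL(p+q,\R)$ one would instead need $\det(\mathrm{diag}(A,B))=1$, forcing a choice such as $A=-\id_p$, $B=\id_q$ when $p$ is even (or a suitable scalar normalization), and that the full set of such $g_0$, and hence the number of symmetries at a point, depends on the resulting kernel $K$; but for the stated Proposition the single element $\mathrm{diag}(\id_p,-\id_q)$ in $PGL(p+q,\R)$ suffices, and Proposition \ref{2.4} then guarantees an infinite family of symmetries at each point.
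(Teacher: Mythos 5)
Your proposal is correct and follows exactly the route the paper intends: the paper simply invokes Proposition \ref{2.4} and refers to ``a direct calculation'' (carried out in the cited thesis), and your calculation with the class of $\operatorname{diag}(\operatorname{id}_p,-\operatorname{id}_q)$ in $\hat G_0\subset PGL(p+q,\R)$, acting as $X\mapsto BXA^{-1}=-X$ on $\hat\fg_{-1}$, is precisely that verification. Your remarks on the scalar ambiguity in $PGL$ versus the determinant constraint in $SL(p+q,\R)$ also match the paper's closing comment in \ref{2.4} about different group choices affecting only the kernel $K$ and the count of symmetries.
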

An explicit description of the harmonic curvature in individual cases yields
that if $p>2$ and $q>2$ then this has got two components in homogeneity one,
i.e.\ two torsions.
Hence by Propositions \ref{3.2} and lemma \ref{2.1} we conclude that, \cite[Corollary
3.2]{ja}:
\begin{prop*}
  Grassmannian locally symmetric space of type $(p>2,q>2)$
  is flat, i.e.\ locally isomorphic to the homogeneous model.
\end{prop*}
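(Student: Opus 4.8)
The plan is to combine the structural restrictions on locally symmetric $|1|$--graded geometries with the explicit decomposition of the harmonic curvature for almost Grassmannian structures. By Proposition \ref{3.2}, a Grassmannian locally symmetric space is torsion--free, so the lowest homogeneous component of the Cartan curvature, which by the Lemma of \ref{2.1} coincides with the corresponding component of $\ka_H$, cannot live in homogeneity one; it must sit in homogeneity two, i.e.\ it is the Weyl curvature $W$. So I would first argue that \emph{both} homogeneity--one components of $\ka_H$ vanish, and then analyse $W$.

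First I would recall the standard identification of $H_2(\hat\fp_+,\hat\fg)$ as a $\hat G_0$--module via Kostant's theorem and note that for $p>2$, $q>2$ it decomposes into exactly two irreducible pieces of homogeneity one (the ``two torsions'' mentioned in the excerpt) plus pieces of higher homogeneity. By Lemma \ref{lema}, any tensor of odd degree invariant under the symmetry at $x$ vanishes at $x$; the harmonic curvature components in homogeneity one are torsion--type tensors of odd degree, so they vanish at every point where a local symmetry exists, i.e.\ everywhere. That already follows from Proposition \ref{3.2}, but I would restate it to set up the dichotomy: the only surviving harmonic component could be $W$ in homogeneity two.

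Next I would invoke Theorem \ref{3.2}: around each point $x$ there is a torsion--free Weyl connection $\na$ invariant under $s_x$ with $\Rho(x)=0$, so by \eqref{eq1} we have $W(x)=R(x)$, and moreover $\na R$ vanishes at $x$. Since $T=0$, $W$ is the homogeneity--two harmonic component and hence invariant under the symmetry. Now $W$ itself is a tensor of \emph{even} degree (degree four), so Lemma \ref{lema} does not directly kill it; this is exactly the point where the argument for $p,q>2$ must differ from, say, the conformal case. Instead I would differentiate once: $\na W$ is a tensor of degree five, odd, invariant under $s_x$, hence $\na W(x)=0$; but since $x$ was arbitrary, $\na W\equiv 0$. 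Thus $W$ is parallel for the invariant Weyl connection, and combined with $\na R=0$ at each point and $W=R+\partial\Rho$ one gets strong algebraic constraints. The cleanest route to flatness is then to show $W$ must actually vanish: here I would use the explicit form of the two homogeneity--one harmonic components and the fact that, for almost Grassmannian structures with $p>2,q>2$, the Bianchi identity (equivalently the $\partial^*$--closedness and $\square$--harmonicity of $\ka_H$) forces the homogeneity--two component $W$ to be expressible through covariant derivatives of the homogeneity--one components. Since those vanish, $W=0$, hence $\ka_H=0$, and by the Lemma of \ref{2.1}, $\ka=0$, so the geometry is flat and locally isomorphic to the homogeneous model.

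The main obstacle I anticipate is the last step: ruling out a nonzero parallel Weyl curvature $W$. One needs either a representation--theoretic input — that in the relevant range of $(p,q)$ the harmonic module $H_2(\hat\fp_+,\hat\fg)$ has its homogeneity--two part determined by the homogeneity--one part under the Bianchi identity, so it cannot be independently nonzero — or a direct computation showing a parallel $W$ satisfying the algebraic Bianchi symmetries for $\hat\fg=\frak{sl}(p+q,\R)$ with $p,q>2$ must be zero. This is where I would lean on the explicit description of the harmonic curvature referenced in the text (\cite[Corollary 3.2]{ja}) and the general machinery from \cite{CS}; the rest of the argument is the formal bookkeeping with Lemma \ref{lema}, Theorem \ref{3.2}, and the Lemma of \ref{2.1} sketched above.
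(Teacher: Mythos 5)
There is a genuine gap, and it sits exactly where you flag your ``main obstacle.'' The decisive input in the paper's argument is a representation--theoretic fact that you misstate: for type $(p>2,q>2)$ the harmonic curvature has \emph{exactly two} irreducible components and \emph{both} are of homogeneity one (two torsions) --- there are no ``pieces of higher homogeneity'' and in particular no Weyl--curvature--type component in $H_2(\hat\fp_+,\hat\fg)$. Granting that, the proof is immediate and is the paper's proof: by Proposition \ref{3.2} a locally symmetric $|1|$--graded geometry is torsion--free, so both harmonic components vanish, hence $\ka_H=0$, and the lemma in \ref{2.1} gives $\ka=0$, i.e.\ flatness. (If one prefers your bookkeeping with $W$: once $T=0$, the lowest possibly nonzero homogeneous component of $\ka$ would have to lie in $\ker\square$ in homogeneity $\ge 2$, and for $p,q>2$ there simply is no such harmonic piece, so $W=0$ and then $\ka=0$; no differentiation of $W$ is needed.)

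Your substitute route --- show $\na W\equiv 0$ for the invariant Weyl connection and then rule out a nonzero parallel $W$ --- is not completed and cannot be completed by the tools you list without reinstating the missing fact. Indeed, parallelism of the curvature together with symmetry invariance is perfectly compatible with $W\ne 0$: the paper's own example (Theorem \ref{th1} and \ref{4.5}, \ref{4.6}(2)) is a non--flat Grassmannian symmetric space of type $(2,q)$ carrying an invariant torsion--free connection with parallel curvature. The only reason the same phenomenon is excluded for $p>2,q>2$ is that Kostant's theorem leaves no homogeneity--two harmonic module there, which is precisely the sentence preceding the proposition in \ref{3.4} (and the content of \cite[Corollary 3.2]{ja}). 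Your suggested ``Bianchi identity expresses the homogeneity--two component through derivatives of the homogeneity--one components'' is not the mechanism: the lemma in \ref{2.1} controls only the \emph{lowest} nonzero homogeneous component of $\ka$, and the correct statement is that the homogeneity--two harmonic component does not exist for this type, not that it is determined by the torsions. So the skeleton of your argument (Lemma \ref{lema} kills odd--degree invariants, Proposition \ref{3.2} kills torsion, the lemma in \ref{2.1} transfers vanishing from $\ka_H$ to $\ka$) matches the paper, but the one fact that makes the proposition true for $p,q>2$ is missing and replaced by an argument that would equally ``apply'' to type $(2,q)$, where the conclusion is false.
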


Hence the only non--flat almost Grassmannian structures which can carry 
(local) symmetries are of type $(p,q)$ where $p$ or $q$ is 2; this is always
supposed hereafter.
In all these cases, the harmonic curvature has two components which are mostly
of homogeneity one and two.
(Note that the extremal case $p=q=2$ has a specific feature, namely, there are
two components of homogeneity two.
Moreover, an almost Grassmannian structure of type $(2,2)$ is equivalent to a 
conformal pseudo--Riemannian structure of the split signature, \cite[section 3.5]{C}.)
Since the torsion part vanishes for any Grassmannian locally symmetric space, 
the remaining component of homogeneous degree two corresponds to the Weyl curvature 
which is then the only obstruction to the local flatness of the structure.

From  \ref{3.2} we know that the existence of a local symmetry at a point $x$ 
yields some restriction on the Weyl curvature at that point.
This heavily forces the freedom for another possible symmetries at $x$:
Suppose there are two different local symmetries at $x$
which are covered by $\va_1$ and $\va_2$. 
Let $\si_1$ and $\si_2$ be the Weyl structures associated to $\va_1$ and
$\va_2$ by lemma \ref{3.2} and let $\U$ be the one--form such that
$\si_2=\si_1+\U$. 
In this way, two different symmetries at $x$ defines an element $\U_x$ in 
$T^*_xM\cong E_x\otimes F^*_x$, which turns out to be non--zero.
With this notation, it can be proved the following, \cite[section 4.3]{ja-dga07}:
\begin{thm*}			\label{twosym}
  Let $M$ be a smooth manifold with an almost Grassmannian structure of type $(2,q)$ or 
  $(p,2)$.
  If there are two different local symmetries centered at a point $x\in M$
  and the corresponding covector $\U_x$ constructed above has maximal rank
  then the Weyl curvature vanishes at $x$ and, consequently, the Cartan
  curvature vanishes at $x$.
\end{thm*}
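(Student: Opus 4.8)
The plan is to transport the data of the two symmetries to a single point $u_0$ of $\G$ over $x$ and then to read off what the covering automorphisms $\va_1,\va_2$ force on the curvature function $\ka$ at that point.

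First I would pick $u_0$ in the fibre over $x$ lying in the image of the Weyl structure $\si_1$. By lemma \ref{3.2} the automorphism $\va_1$ maps $\si_1(\G_0)$ to itself over $x$, so $\va_1(u_0)=u_0\cdot g_1$ for a unique $g_1\in G_0$, and condition (ii) in the definition of symmetry means $\Ad_{g_1}|_{\fg_{-1}}=-\id$. Since $\Ad_{g_1}$ preserves the Killing form this also gives $\Ad_{g_1}|_{\fg_1}=-\id$, and as $\fg_0=[\fg_{-1},\fg_1]$ it follows that $\Ad_{g_1}|_{\fg_0}=\id$; in fact $g_1$ is the only element of $G_0$ with this property, up to the centre. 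From $\si_2=\si_1+\U$ one gets $u_0\cdot\exp Z$ in the image of $\si_2$ over $x$, where $Z\in\fg_1$ is the frame--form value of $\U$ at $u_0$ and thus represents $\U_x$ under $T^*_xM\cong\fg_1$; by hypothesis $Z$ has maximal rank. The same reasoning applied to $\va_2$ and $\si_2$ gives $\va_2(u_0\cdot\exp Z)=(u_0\cdot\exp Z)\cdot g_2$ with $g_2=g_1=:g$ (again up to the centre), hence $\va_2(u_0)=u_0\cdot h$ with $h=\exp Z\cdot g\cdot\exp(-Z)$. Using $\Ad_g Z=-Z$ and that $\fg_1$ is abelian one computes $h\,g^{-1}=\exp Z\cdot\exp(-\Ad_g Z)=\exp(2Z)$.

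Now $\va_i^*\om=\om$ gives $\ka\circ\va_i=\ka$, so by $P$--equivariance the element $\ka(u_0)\in\wedge^2(\fg/\fp)^*\otimes\fg$ is fixed by the action of both $g$ and $h$, hence of $h g^{-1}=\exp(2Z)$. Since $\Ad_g$ acts by $-\id$ on $\fg_{-1}$ and $\fg_1$ and by $\id$ on $\fg_0$, invariance under $g$ kills the $\fg_{-1}$-- and $\fg_1$--valued parts of $\ka(u_0)$, so $\ka(u_0)$ reduces to its $\fg_0$--valued component, which in the notation of \ref{3.2} is exactly the Weyl curvature $W_x$. (This already shows that a $|1|$--graded geometry admitting a symmetry at $x$ has its full Cartan curvature at $x$ equal to the Weyl curvature at $x$, which is what the word ``consequently'' in the statement refers to.) Next, $P_+$ acts trivially on $\fg/\fp$, while $\Ad_{\exp 2Z}$ sends $X_0\in\fg_0$ to $X_0+[2Z,X_0]$ with $[2Z,X_0]\in\fg_1$; therefore invariance of the $\fg_0$--valued $W_x$ under $\exp(2Z)$ is equivalent to $[Z,W_x(\xi,\eta)]=0$ for all $\xi,\eta\in\fg_{-1}$.

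It remains to deduce $W_x=0$. By normality $\partial^*W_x=0$, and since the torsion part of the harmonic curvature vanishes at $x$ by lemma \ref{lema}, one argues (using lemma \ref{2.1}, and the torsion--freeness which one should check in this generality) that $W_x$ is harmonic, i.e.\ it lies in the Weyl--curvature module $H_2(\fp_+,\fg)\cong\ker\square$, which for type $(2,q)$ is a concrete irreducible $\fg_0$--module. Writing a value $W_x(\xi,\eta)$ as a block pair $(P,Q)$ in $\frak s(\frak{gl}(2,\R)\oplus\frak{gl}(q,\R))$ and $Z$ as a surjection $\R^q\to\R^2$, the condition $[Z,W_x(\xi,\eta)]=0$ becomes $P\o Z=Z\o Q$, that is, $Q$ preserves $\ker Z$ and $P$ is the map it induces on $\R^q/\ker Z\cong\R^2$. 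The main obstacle is the purely representation--theoretic fact that no non--zero element of $H_2(\fp_+,\fg)$ can have all of its values $(P,Q)$ constrained in this way by a single maximal--rank $Z$; I would establish this by a direct computation with the Kostant description of $H_2$, the case $(p,2)$ being completely analogous by duality. Once $W_x=0$, the identification $\ka(u_0)=W_x$ from the previous paragraph shows that the whole Cartan curvature vanishes at $x$.
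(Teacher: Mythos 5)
Your reduction is set up correctly, and it is worth saying that the paper itself contains no proof of this theorem to compare against: it is quoted from \cite[section 4.3]{ja-dga07} (with a remark that the one--form used there is constructed differently but agrees with $\U_x$ up to a nonzero multiple at $x$). The formal part of your argument is sound: choosing $u_0$ in the image of the $\va_1$--invariant Weyl structure over $x$, you correctly get $\va_1(u_0)=u_0g$ with $\Ad_g=-\id$ on $\fg_{\pm1}$ and $\id$ on $\fg_0$, then $\va_2(u_0)=u_0h$ with $hg^{-1}=\exp(2Z)$ (the ambiguity by the kernel of $(G,P)$ is harmless since it acts trivially under $\Ad$), and invariance of $\ka(u_0)$ under $g$ and under $\exp(2Z)$ forces $\ka(u_0)$ to be $\fg_0$--valued (so it is the Weyl curvature at $x$) and to satisfy $[Z,\ka(u_0)(\xi,\eta)]=0$ for all $\xi,\eta$.

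The genuine gap is that the decisive step is only announced, not proved: the assertion that no nonzero element of the homogeneity--two curvature module can have all of its values in the centralizer of a single maximal--rank $Z\in\fg_1$ is exactly the content of the theorem, and ``I would establish this by a direct computation'' is not a proof. This step cannot be waved through, because the maximal--rank hypothesis is precisely where the difficulty lives: the remark following Theorem \ref{th1} exhibits a non--flat homogeneous Grassmannian symmetric space on which distinct symmetries at a point do coexist with $\U_x$ of rank one, so the centralizer condition for low--rank $Z$ does \emph{not} kill $W_x$, and the computation must genuinely exploit rank $2$. Moreover, the module on which you propose to do this computation is not pinned down by your argument. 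Pointwise, normality only gives $\ka(u_0)\in\ker\partial^*$; the lemma in \ref{2.1} yields harmonicity of the lowest homogeneous component only when the lower components vanish identically, whereas your hypotheses (via Lemma \ref{lema}) give $T(x)=0$ at the single point $x$, not $T\equiv0$ near $x$ --- you flag this yourself (``which one should check in this generality'') but do not resolve it. In homogeneity two, $\ker\partial^*$ is in general strictly larger than $\ker\square$ (it also contains the image of $\partial^*$ coming from $\wedge^3\fp_+\otimes\fg_{-1}$), so either you must establish $W_x\in\ker\square$ at the symmetric point, or you must prove the linear--algebra claim on the larger space $\ker\partial^*\cap(\wedge^2\fg_1\otimes\fg_0)$; as it stands, neither is done, and the proof is incomplete at its core.
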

(Note that the Theorem is originally formulated with respect to a one--form
which was constructed in a different way.
However, it is easy to check the two one--forms coincide up to a non--zero 
multiple at $x$.)

\section{The example}
In this section we present an example of non--flat homogeneous Grassmannian 
symmetric space. 
By the previous results, this has to be necessarily either of type 
$(2,q)$ or of type $(p,2)$.
Our example is of the former type and it comes as a particular application of  more 
general constructions from \cite{CZ} and \cite{C} where we refer for details.
The section concludes with a discussion on an invariant connection preserving
the Grassmannian structure on the constructed space.

\subsection{Path geometry of chains}		\label{4.2}
Let $(\G\to M,\om)$ be a contact parabolic geometry of type $(G,P)$ and let
$\fg$ be the corresponding Lie algebra with the contact grading as in example
\ref{2.2}(2).
The 1--dimensional subspace $\fg_{-2}\subset\fg_-$ gives rise to a family of
distinguished curves on $M$ which are called the \emph{chains} and which play
a crucial role in the sequel.
More specifically, chains are defined as projections of flow lines of constant vector 
fields on $\G$ corresponding to non--zero elements of $\fg_{-2}$.
Equivalently, using the notion of development of curves, chains are the curves which
develop to model chains in the homogeneous model $G/P$.
The latter curves passing through the origin are the curves of type 
$t\mapsto b\exp(tX)P$, for $b\in P$ and $X\in\fg_{-2}$.
As non--parametrized curves, chains are uniquely determined by a tangent direction 
in a point, \cite[section 4]{CSZ}.
By definition, chains are defined only for directions which are transverse to
the contact distribution $\D\subset TM$.
In classical terms, the family of chains defines a path geometry on
$M$ restricted, however, only to the directions transverse to $\D$.

A \emph{path geometry} on $M$ is equivalent to a decomposition
$\Xi=E\oplus V$ of the tautological subbundle $\Xi\subset T\P TM$  
where $V$ is the vertical subbundle of the obvious projection 
$\P TM\to M$ and $E$ is a fixed transversal line subbundle.
(Given such a decomposition, the paths in the family are the projections of integral 
submanifolds of the distribution $E$.)
Lie bracket of vector fields behave specifically with respect to the 
decomposition above and it turns out this structure on $\P TM$ can be described as a 
parabolic geometry 
of type $(\tG,\tP)$, where
$\tG=PGL(m+1,\R)$, $m=\dim M$, and $\tP$ is the parabolic subgroup as follows.
Let us consider the grading of $\tg=\frak{sl}(m+1,\R)$ which is schematically 
described by the block decomposition with blocks of sizes 1, 1, and $m-1$ along 
the diagonal:
$$
  \pmat{\tg_0&\tg^E_1&\tg_2\\\tg^E_{-1}&\tg_0&\tg^V_1\\\tg_{-2}&\tg^V_{-1}&\tg_0}.
$$
Then $\tp:=\tg_0\oplus\tg_1\oplus\tg_2$ is  a parabolic subalgebra of $\tg$ and 
$\tP\subset\tG$ is the subgroup represented by block upper triangular
matrices so that its Lie algebra is $\tp$.
As usual, the parabolic geometry associated to the path geometry on $M$ is uniquely 
determined (up to isomorphism) provided we consider it is regular and 
normal in the sense of \ref{2.1}.

Back to the initial setting, given a contact manifold $M$ with a parabolic contact 
geometry of type $(G,P)$, the path geometry of chains gives rise to a parabolic 
geometry of type $(\tilde G,\tP)$ restricted to the open subset $\tilde M\subset\P 
TM$ consisting of all lines which are transverse to the contact distribution 
$\D\subset TM$.
Let $Q\subset P$ be the subgroup which stabilizes the subspace
$\fg_{-2}\subset\fg_-$ under the action of $P$ on $\fg_-$ induced from the adjoint
action on $\fg$; the Lie algebra of $Q$ is evidently $\fq=\fg_0\oplus\fg_2$.
By \cite[lemma 2.2]{CZ}, the space $\tilde M$ of all lines in $TM$ transverse to 
$\D$ is identified with the orbit space $\G/Q$.

Altogether, for a parabolic contact structure on $M$ given  by a regular and normal 
parabolic geometry $(\G\to M,\om)$ of type $(G,P)$, let $(\tilde\G\to\tilde
M,\tom)$ be the regular normal parabolic geometry of type $(\tG,\tP)$ corresponding 
to the path geometry of chains.
Due to the identification $\tilde M\cong\G/Q$, the couple $(\G\to\tilde M,\om)$
forms a Cartan (but not parabolic) geometry of type $(G,Q)$.
In some cases, the two Cartan geometries over $\tilde M$ can be directly
related by a pair of maps $(i:Q\to\tP,\al:\fg\to\tg)$ so that
$\tilde\G\cong\G\x_Q\tP$ and $j^*\tilde\om=\al\o\om$, where $j$ is the canonical 
inclusion $\G\hookrightarrow\G\x_Q\tP$.
The two maps $(i,\al)$ has to be compatible in some strong sense by the
equivariancy of $j$ and the fact that both $\om$ and $\tilde\om$ are Cartan 
connections, \cite[Proposition 3.1]{CZ}.
On the other hand, any pair of maps $(i,\al)$ which are compatible in the above 
sense gives rise to a functor from Cartan geometries of type $(G,Q)$ to 
Cartan geometries of type $(\tG,\tP)$ and there is a perfect control over the natural 
equivalence of functors associated to different pairs.
For what follows, we need to understand the effect of such construction on the
curvature of the induced Cartan geometry.
In particular, \cite[Proposition 3.3]{CZ} shows that:
\begin{lemma*}		\label{l1}
  Let a flat Cartan geometry of type $(G,Q)$ be given.
  Then the Cartan geometry of type $(\tG,\tP)$ induced by the pair $(i,\al)$
  is flat if and only if $\al$ is a homomorphism of Lie algebras.
\end{lemma*}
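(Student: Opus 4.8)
The statement to prove is Lemma \ref{l1}: given a flat Cartan geometry of type $(G,Q)$, the induced Cartan geometry of type $(\tG,\tP)$ obtained via the pair $(i,\al)$ is flat precisely when $\al$ is a Lie algebra homomorphism. The natural starting point is the general curvature formula for the functorial construction $\tilde\G\cong\G\x_Q\tP$ with $j^*\tilde\om=\al\o\om$. I would recall from \cite[Proposition 3.3]{CZ} (which the excerpt explicitly cites as the source) that the curvature $\tilde K$ of $\tom$, pulled back along $j:\G\hookrightarrow\tilde\G$, can be expressed through the curvature $K$ of $\om$ and a ``defect'' term measuring the failure of $\al$ to be a homomorphism. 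Concretely, since $\tilde K=d\tom+\tfrac12[\tom,\tom]$, pulling back along $j$ and using $j^*\tom=\al\o\om$ gives
\begin{equation*}
  j^*\tilde K = \al\o\, d\om + \tfrac12[\al\o\om,\al\o\om]_{\tg}.
\end{equation*}
If one writes $d\om = K - \tfrac12[\om,\om]_\fg$, this becomes
\begin{equation*}
  j^*\tilde K = \al\o K + \tfrac12\bigl([\al(\cdot),\al(\cdot)]_{\tg} - \al([\cdot,\cdot]_\fg)\bigr)(\om,\om),
\end{equation*}
so that the pulled-back curvature is the image under $\al$ of the original curvature $K$ plus the symmetric bilinear expression built from the ``bracket defect'' $B(X,Y):=[\al X,\al Y]_{\tg}-\al[X,Y]_\fg$ evaluated on the $\fg$-valued form $\om$.

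**Key steps.** First I would make the displayed formula precise, being careful that it is genuinely $P$-equivariant / $\tP$-equivariant so that it descends to the whole of $\tilde\G$ — this is exactly the content guaranteed by the compatibility of the pair $(i,\al)$, which the excerpt has already invoked. Second, I would impose flatness of the $(G,Q)$-geometry, i.e.\ $K=0$; then $j^*\tilde K$ reduces to the term $\tfrac12 B(\om,\om)$. Third, the ``if'' direction is immediate: if $\al$ is a homomorphism then $B\equiv 0$, so $j^*\tilde K=0$, and by equivariance and the fact that $j(\G)$ meets every $\tP$-orbit in $\tilde\G$ this forces $\tilde K=0$ everywhere, i.e.\ the induced geometry is flat. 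Fourth, for the ``only if'' direction I would argue contrapositively: suppose $\al$ is not a homomorphism, so $B\ne 0$ as a bilinear map $\fg\times\fg\to\tg$; since $\om$ furnishes a linear isomorphism $T_u\G\cong\fg$ at every point, the expression $B(\om,\om)$ is nonzero at every point of $\G$, hence $j^*\tilde K\ne 0$ and the induced geometry is not flat. A small subtlety here is that $B$ is symmetric while $j^*\tilde K$ is a $2$-form (alternating); one should check that the relevant contribution is the genuinely antisymmetrized part and confirm it does not vanish identically — but since $\om$ hits all of $\fg$ and $\tg$ as well, non-vanishing of $B$ as a map does translate to non-vanishing of the induced $2$-form (one plugs in $X,Y$ with $B(X,Y)\ne B(Y,X)$, or more carefully notes the curvature $2$-form sees $B$ through $[\al X,\al Y]-\al[X,Y]$ which is already built antisymmetrically in $X,Y$ up to the symmetric correction, so the honest obstruction is $\al[X,Y]-[\al X,\al Y]$ itself).

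**The main obstacle.** I expect the essential difficulty to be purely bookkeeping: getting the curvature transformation formula for the functorial construction stated correctly, with the right equivariance, so that vanishing on $j(\G)$ genuinely propagates to all of $\tilde\G$. This is where one must use that $\tilde\G=\G\x_Q\tP$ and that $\tom$ is \emph{determined} by $\al\o\om$ together with the reproduction-of-fundamental-vector-fields and equivariance axioms — the Cartan connection $\tom$ is not simply $\al\o\om$ off the image of $j$, but its curvature, being a $\tP$-equivariant horizontal $2$-form, is determined by its values along $j(\G)$. Once that reduction is in hand, the algebra is the short computation above and the equivalence $\text{flat}\iff B\equiv 0\iff\al$ a homomorphism falls out immediately; I would simply cite \cite[Proposition 3.3]{CZ} for the formula and spend the proof on this descent argument plus the two directions of the equivalence.
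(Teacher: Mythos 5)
Your argument is correct and follows essentially the same route as the source of the statement: the paper gives no proof of its own but quotes \cite[Proposition 3.3]{CZ}, whose computation is exactly yours --- pull back the curvature along $j$ using $j^*\tom=\al\o\om$, write $j^*\tilde K=\al\o K$ plus the bracket defect $B(X,Y)=[\al X,\al Y]-\al[X,Y]$ evaluated through $\om$, and use horizontality together with $\tP$--equivariance (and the fact that $j(\G)$ meets every fiber) to pass between vanishing on $j(\G)$ and flatness of the whole induced geometry, with the isomorphisms $\om_u\colon T_u\G\to\fg$ giving the converse direction. The only slip is your remark that $B$ is symmetric: it is antisymmetric (both brackets are), so the ``subtlety'' about antisymmetrization evaporates and, when $K=0$, one simply has $j^*\tilde K(\xi,\eta)=B(\om(\xi),\om(\eta))$ on the nose.
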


\subsection{Correspondence spaces and twistor spaces}		\label{4.3}
Below we enjoy an application of another general construction relating
parabolic geometries of different types, namely, the construction of 
correspondence spaces and twistor spaces in the sense of 
\cite{Cor} or \cite{C}, to which we refer for all details.

Let $\tG$ be a semisimple Lie group and $\tP_1\subset\tP_2\subset\tG$ parabolic
subgroups. 
If a parabolic geometry $(\tilde\G\to\tilde N,\tom)$ of type $(\tG,\tP_2)$ on
a smooth manifold $\tilde N$ is given, then the \emph{correspondence space} 
of $\tilde N$ corresponding to the subgroup $\tP_1\subset\tP_2$ is defined as the 
orbit space $\mathcal{C}\tilde N:=\tilde\G/\tP_1$.
The couple $(\tilde\G\to\mathcal{C}\tilde N,\tom)$ forms a parabolic geometry
of type $(\tG,\tP_1)$.
Let $\mathcal{V}\subset T\mathcal{C}\tilde N$ be the vertical subbundle of 
the natural projection $\mathcal{C}\tilde N\to\tilde N$.
Then easily, $i_\xi\tilde\ka=0$ for any $\xi\in \mathcal{V}$, where 
$\tilde\ka$ is the Cartan curvature of $\tom$.
Note that $\mathcal{V}$ corresponds to the $\tP_1$--invariant subspace
$\tp_2/\tp_1\subset\tg/\tp_1$ under the identification $T\mathcal{C}\tilde
N\cong\tilde\G\x_{\tP_1}(\tg/\tp_1)$.

Conversely, given a parabolic geometry $(\tilde\G\to\tilde M,\tom)$ of type
$(\tG,\tP_1)$, let $\tilde\ka$ be its Cartan curvature, and let 
$\mathcal{V}\subset T\tilde M$ be the distribution corresponding to 
$\tp_2/\tp_1\subset\tg/\tp_1$.
Then, locally, $\tilde M$ is a correspondence space of a parabolic geometry 
of type $(\tG,\tP_2)$   if and only if $i_\xi\tilde\ka=0$ for all 
$\xi\in \mathcal{V}$, \cite[Theorem 3.3]{C}.
Note that this condition in particular implies the distribution $\mathcal{V}$ is
integrable and the local leaf space of the corresponding foliation is called
the \emph{twistor space}.
The parabolic geometry of type $(\tG,\tP_2)$ is locally formed over the 
corresponding twistor space.

Note that the present considerations does not restrict only to parabolic
geometries, as we actually partially observed in the previous subsection.
Still, for parabolic geometries the constructions above are always compatible with 
the normality condition. 
Concerning the regularity, this is not true in general, but an efficient control 
of this condition is usually very easy.
Dealing with a regular normal parabolic geometry of type $(\tG,\tP_1)$, 
let $\tilde\ka_H$ be the harmonic curvature and let $\mathcal{V}\subset
T\tilde M$ be as above. 
Then there is the following useful simplification of the previous characterization of
correspondence spaces, \cite[Proposition 3.3]{C}:
If $i_\xi\tilde\ka_H=0$ for all $\xi\in \mathcal{V}$ then $i_\xi\tilde\ka=0$ for all 
$\xi\in \mathcal{V}$.

\subsection*{Example}
Let $(\tilde\G\to\P TM,\tom)$ be the Cartan geometry associated to a path
geometry on $M$, i.e.\ a parabolic geometry of type $(\tG,\tP)$ with the
notation as in \ref{4.2}.
Let $\hat P\subset\tG$ be the subgroup (containing $\tP$ and) consisting of
block upper triangular matrices with Lie algebra
$\hat\fp=\tg_{-1}^E\oplus\tp$ according to the description above.
Note that the underlying structure of a parabolic geometry of type
$(\tG,\hat P)$ is just the Grassmannian structure of type $(2,q)$, where
$q=\dim M-1$, cf.\ the definition in \ref{3.3}.
The distribution in $T\P TM$ corresponding to the linear subspace
$\hat\fp/\tp\subset\tg/\tp$ is just the line subbundle $E$ determined by the
path geometry on $M$, in particular this is always involutive.
Hence the corresponding local twistor space $\tilde N$ coincides locally with the space 
of paths of the path geometry.
From the above characterization of correspondence spaces and the explicit description of the 
irreducible components of the harmonic curvature $\tilde\ka_H$ of the Cartan
connection $\tom$, it follows that \cite[example 3.4]{C}:
\begin{lemma*}		\label{l4}
  Let $(\tilde\G\to\P TM,\tom)$ be a Cartan geometry of type $(\tG,\tP)$ and
  let $\tilde N$ be the local twistor space as above.
  Then the Cartan geometry on $\P TM$ descends to a Grassmannian 
  structure on $\tilde N$ if and only if $\tom$ is torsion--free.
\end{lemma*}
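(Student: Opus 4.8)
The plan is to set up the correspondence-space machinery of \ref{4.3} with $\tP_1=\tP$ and $\tP_2=\hat P$, so that $\tilde N$ is precisely the local twistor space attached to the $\hat P$-invariant line $\hat\fp/\tp\subset\tg/\tp$, which we already identified with the tautological line $E$ of the path geometry. By the characterization of correspondence spaces (\cite[Theorem 3.3]{C}), the Cartan geometry $(\tilde\G\to\P TM,\tom)$ of type $(\tG,\tP)$ descends to a parabolic geometry of type $(\tG,\hat P)$ on $\tilde N$ if and only if $i_\xi\tilde\ka=0$ for every $\xi$ tangent to $E$; and since $\hat P$ has $\tg_{-1}^E\oplus\tp$ as its Lie algebra, the underlying structure on $\tilde N$ is the Grassmannian structure of type $(2,q)$ with $q=\dim M-1$ (this is the remark preceding the statement, via \ref{3.3}). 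So the content of the Lemma is the equivalence
\[
  i_\xi\tilde\ka=0\ \ \forall\,\xi\in E
  \quad\Longleftrightarrow\quad
  \tom\ \text{is torsion--free}.
\]

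First I would pass from $\tilde\ka$ to the harmonic curvature $\tilde\ka_H$ using the simplification at the end of \ref{4.3}: it suffices to show $i_\xi\tilde\ka_H=0$ for all $\xi\in E$ is equivalent to torsion-freeness, because $i_\xi\tilde\ka_H=0$ on $E$ already forces $i_\xi\tilde\ka=0$ on $E$, while the converse is immediate. Then the whole argument reduces to representation theory of $\tg=\frak{sl}(m+1,\R)$ with the $|2|$-grading having blocks $1,1,m-1$: one writes down the irreducible components of $H_2(\tp_+,\tg)$, i.e.\ the possible harmonic curvature types for path geometries, and inspects for each component whether contraction with the $\tg_{-1}^E$-direction is zero. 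This is exactly the computation quoted as \cite[example 3.4]{C}; the torsion part of $\tilde\ka_H$ (the homogeneity-one components, living in $\wedge^2\tp_+\otimes\tp_+$ modulo $\square$) is precisely the part that does \emph{not} vanish on contraction with $E$, whereas the curvature part does. The point is that for path geometries there are two harmonic curvature components, and one of them is a torsion supported in a way that pairs nontrivially with $\tg_{-1}^E$, so $i_\xi\tilde\ka_H=0$ on $E$ is equivalent to the vanishing of that torsion.

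The key steps in order: (1) identify $\hat P$ and check $\hat\fp/\tp\cong\tg_{-1}^E$, so that the relevant distribution $\mathcal V$ of \ref{4.3} is the involutive line bundle $E$ and the twistor space is the (local) space of paths; (2) note the underlying structure of a $(\tG,\hat P)$-geometry is Grassmannian of type $(2,q)$; (3) invoke the correspondence-space criterion to rephrase ``descends to $\tilde N$'' as ``$i_\xi\tilde\ka=0$ on $E$''; (4) reduce to harmonic curvature via \cite[Proposition 3.3]{C}; (5) run the Kostant/representation-theoretic bookkeeping identifying the two harmonic components of a path geometry and checking which survive contraction with $\tg_{-1}^E$ — concluding that the surviving obstruction is exactly the torsion of $\tom$.

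I expect step (5) to be the main obstacle — not conceptually, but because it requires the explicit decomposition of $H_2(\tp_+,\tg)$ for this particular grading and a careful homogeneity count to match ``contracts trivially with $E$'' against ``torsion part''. Everything else is formal manipulation with the constructions of \ref{4.3}. Since this computation is carried out in \cite[example 3.4]{C}, in the write-up I would cite it for the delicate part and keep the rest self-contained, exactly as the excerpt's phrasing (``it follows that'') suggests the authors intend.
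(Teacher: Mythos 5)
Your argument is essentially the paper's own: the paper gives no independent proof of this Lemma, but derives it exactly as you do, from the correspondence-space criterion of \ref{4.3} (reduced to harmonic curvature via \cite[Proposition 3.3]{C}) together with the explicit decomposition of the harmonic curvature of path geometries, citing \cite[example 3.4]{C} for that representation-theoretic step just as you propose. One cosmetic slip: the torsion component of $\tilde\ka_H$ takes values in $\tg/\tp$ (equivalently in the negative part of $\tg$), not in $\wedge^2\tp_+\otimes\tp_+$ --- the latter is where the purely curvature-type components live --- but this does not affect your argument, since you delegate that bookkeeping to the citation anyway.
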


\subsection{Applications}			\label{4.4}
Now, the promised example of a Grassmannian symmetric space appears as an
application of the general principles described in previous paragraphs.
We are going to start with the model Lagrangean contact structure, however
the analogous ideas work for another parabolic contact structures as well. 
This is discussed in remark \ref{4.6}(4) where we also highlight the differences.

A \emph{Lagrangean contact structure} on a smooth manifold $M$ of odd dimension
$m=2n+1$ consists of a contact distribution $\D\subset TM$ with a decomposition
$\D=L\oplus R$ such that the subbundles are isotropic with respect to the Levi 
bracket $\L:\D\x \D\to TM/\D$.
Lagrangean contact structure is an instance of parabolic contact structure
corresponding to the contact grading of simple Lie algebra
$\fg=\frak{sl}(n+2,\R)$, which is schematically indicated by the following
block decomposition with blocks of sizes 1, $n$, and 1 along the diagonal:
$$
\pmat{\fg_0&\fg_1^L&\fg_2\\ \fg_{-1}^L&\fg_0&\fg_1^R\\
\fg_{-2}&\fg_{-1}^R&\fg_0}.
$$
As in general, the subspace $\fg_{-1}$ defines the contact distribution,
however now it is split as $\fg_{-1}=\fg_{-1}^L\oplus\fg_{-1}^R$ such that
this splitting is invariant under the adjoint action of $\fg_0$
and the subspaces $\fg_{-1}^L$ and $\fg_{-1}^R$ are isotropic  with
respect to the restricted Lie bracket $[\ ,\  ]:\fg_{-1}\x\fg_{-1}\to\fg_{-2}$.
Let $G=PGL(n+2,\R)$  be the Lie group with Lie algebra $\fg$ and let $P\subset
G$ be the subgroup represented by block upper triangular matrices with the Lie algebra
$\fp=\fg_0\oplus\fg_1\oplus\fg_2$.
The homogeneous space $G/P$ is identified with $\P T^*\R\Bbb P^{n+1}$, 
the projectivized cotangent bundle of real projective space of dimension $n+1$,
and the model Lagrangean contact structure on $\P T^*\R\Bbb P^{n+1}$ is
induced from the flat projective structure on $\R\Bbb P^{n+1}$.
Note that this correspondence is just another instance of the correspondence space 
constructions from \ref{4.3}, see \cite[section 4.1]{Cor}.

\smallskip  
Now, put $M=G/P$ and follow the construction from \ref{4.2}:

(1) The subset $\tilde M=\P_0TM$, consisting of all lines in $TM$ which are
transverse to the contact distribution $\D$, is identified with the 
homogeneous space $G/Q$.

(2) The flat parabolic geometry $(G\to G/P,\mu)$ of type $(G,P)$, for $\mu$ being the 
Maurer--Cartan form on $G$, defines the flat Cartan geometry $(G\to G/Q,\mu)$
of type $(G,Q)$.

(3) The latter induces the parabolic geometry $(G\x_Q\tP\to G/Q,\tom)$ of type $(\tG,\tP)$ 
via the pair of maps $(i,\al)$ which are explicitly given in \cite[section 3.5]{CZ}.
Note that $\al:\fg\to\tg$ is not a homomorphism of Lie algebras, hence by lemma \ref{l1}
the induced Cartan geometry is not flat.
By \cite[section 3.6]{CZ}, this is the unique regular normal parabolic geometry 
associated to the path geometry of chains:
\begin{lemma*}		\label{l5}
  Let $(G\to G/Q,\mu)$ be the flat Cartan geometry of type $(G,Q)$ and let
  $(i,\al)$ be the pair of maps as in step (3) above.
  Then the induced parabolic geometry $(G\x_Q\tP\to G/Q,\tom)$
  is a non--flat torsion--free (and hence regular) normal parabolic 
  geometry of type $(\tG,\tP)$.
\end{lemma*}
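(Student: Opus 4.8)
The plan is to assemble the three pieces that have already been prepared.  The geometry $(G\x_Q\tP\to G/Q,\tom)$ of type $(\tG,\tP)$ is \emph{regular and normal} by its very construction (it is, up to isomorphism, the canonical parabolic geometry associated to the path geometry of chains, by \cite[section 3.6]{CZ}), so regularity is not in question; what remains is to establish that it is torsion--free and that it is non--flat.  Each of these follows from a Lemma in the preceding text applied to the map $\al:\fg\to\tg$ of step (3).

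Non--flatness is the quicker of the two.  The input geometry $(G\to G/Q,\mu)$ is flat, being built from the Maurer--Cartan form, so Lemma \ref{l1} applies verbatim: the induced Cartan geometry of type $(\tG,\tP)$ is flat if and only if $\al$ is a homomorphism of Lie algebras.  Since $\al$ is \emph{not} a homomorphism (this is the explicit computation of \cite[section 3.5, 3.6]{CZ}, already recorded in step (3)), the induced geometry cannot be flat.

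For torsion--freeness I would invoke Lemma \ref{l4}.  By \ref{4.3}, the space $G/Q\cong\P_0TM$ is a correspondence space: for the chain of parabolics $\tP\subset\hat P\subset\tG$ with $\hat\fp=\tg^E_{-1}\oplus\tp$, the distribution $\mathcal V=E$ corresponding to $\hat\fp/\tp$ is involutive (it is the line field $E$ of the path geometry), and here the twistor space exists globally --- it is $\tilde N=G/\hat P$, the homogeneous model of Grassmannian structures of type $(2,q)$ with $q=\dim M-1$.  Lemma \ref{l4} then says precisely that $\tom$ descends to a Grassmannian structure on $\tilde N$ if and only if $\tom$ is torsion--free.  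Now $\tilde N=G/\hat P$ is a \emph{homogeneous space}, and the geometry it carries should be the corresponding homogeneous model --- hence flat, and in particular torsion--free; pulling this back up to the correspondence space, or equivalently observing that the curvature of $\tom$ on $G/Q$ is the pullback of the curvature of the geometry on $G/\hat P$ together with the contribution from $\mathcal V$ (which is torsion--free by \cite[example 3.4]{C}), forces the torsion part of $\tom$ to vanish.  The remaining harmonic component, of homogeneity two, is then the Weyl curvature, which is non--zero by the previous paragraph.  Finally, a torsion--free parabolic geometry is automatically regular, as recalled in \ref{2.1}, so the parenthetical "(and hence regular)" needs no separate argument.

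The main obstacle is the torsion--freeness claim: one has to be sure that the harmonic curvature $\tilde\ka_H$ of the chain geometry has no component of homogeneity one, i.e. that its only irreducible component is the homogeneity--two piece, and that this component is annihilated by insertion of vectors from $E$.  This is exactly what the explicit description of the irreducible components of $\tilde\ka_H$ in \cite[example 3.4]{C} provides, so the honest route is to quote that description rather than recompute the Kostant cohomology $H_2(\tp_+,\tg)$ by hand; combined with the correspondence--space criterion of \ref{4.3} (where checking the harmonic curvature suffices) and Lemma \ref{l4}, this closes the argument.
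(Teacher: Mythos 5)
Your non--flatness argument is exactly the paper's: the input geometry $(G\to G/Q,\mu)$ is flat and $\al$ fails to be a Lie algebra homomorphism, so Lemma \ref{l1} gives non--flatness; and quoting \cite[sections 3.5--3.6]{CZ} for regularity and normality is also what the paper does. The gap is in your treatment of torsion--freeness. You try to derive it from Lemma \ref{l4}, but that lemma runs in the wrong direction for your purpose: it says that the geometry on $\P_0TM$ descends to a Grassmannian structure on the twistor space \emph{if and only if} $\tom$ is torsion--free, so to use it you would first have to establish descent independently. Your justification for descent is that the twistor space is ``$G/\hat P$, the homogeneous model of Grassmannian structures, hence flat''. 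This is false on two counts: $\hat P$ is a subgroup of $\tG=PGL(m+1,\R)$, not of $G$, and the twistor space here is $\tilde N\cong G/H$ with $\fh=\fg_{-2}\oplus\fg_0\oplus\fg_2$ (this is computed in the proof of Theorem \ref{th1}); more importantly, the Grassmannian structure it carries is \emph{not} the flat model --- that is the whole point of Theorem \ref{th1}, and it contradicts the non--flatness you yourself established two sentences earlier. If the descended structure were flat, the geometry upstairs would be flat as well. So your argument for torsion--freeness is both circular (descent is exactly what torsion--freeness is needed for, as in the proof of Theorem \ref{th1}) and internally inconsistent.

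The honest route, and the one the paper takes, is to get torsion--freeness directly from \cite[section 3.6]{CZ}: since the input Cartan geometry of type $(G,Q)$ is flat, the curvature of the induced geometry of type $(\tG,\tP)$ is expressed purely through the failure of $\al$ to be a homomorphism, namely through $[\al(X),\al(Y)]-\al([X,Y])$, and the explicit form of $\al$ shows these values lie in $\tp$; normality is checked for the same expression, and regularity then follows from torsion--freeness as you correctly note. In other words, torsion--freeness is an input from the explicit construction in \cite{CZ}, not a consequence of the twistor--space picture; Lemma \ref{l4} is then used afterwards (in Theorem \ref{th1}) to conclude that the geometry descends to a Grassmannian structure on $\tilde N$.
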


(4) Finally, let $\tilde N$ be the space of all chains on $M=G/P$, understood 
as non--parametrized curves as above.
By definition, this is a locally defined  leaf space of the foliation of
$\tilde M$ corresponding to the distribution $E$ as in \ref{4.3}.
In this model case, $\tilde N$ is a homogeneous space and it turns out to be
a Grassmannian symmetric space which is \emph{not} flat, i.e.\ not locally 
isomorphic to the homogeneous model $\tG/\hat P$:
\begin{thm*}					\label{th1}
  Let $M=G/P$ be the model Lagrangean contact structure.
  Then the space $\tilde N$ of all chains in $M$ is a non--flat homogeneous Grassmannian 
  symmetric space of type $(2,q)$, where $q=\dim M-1$.
\end{thm*}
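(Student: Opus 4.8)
The plan is to establish the three assertions of Theorem~\ref{th1} in turn: that $\tilde N$ is a homogeneous space carrying a Grassmannian structure of type $(2,q)$, that this structure is non--flat, and that it is symmetric. For the first assertion I would combine the general machinery already set up. By Lemma~\ref{l5}, the parabolic geometry $(G\x_Q\tP\to G/Q,\tom)$ of type $(\tG,\tP)$ associated to the path geometry of chains on $M=G/P$ is regular, normal, and torsion--free. Applying Lemma~\ref{l4} to this geometry, torsion--freeness of $\tom$ guarantees that the Cartan geometry on $\P TM=G/Q$ descends along the foliation by integral curves of the line bundle $E$ to a Grassmannian structure of type $(2,q)$ on the local leaf space $\tilde N$, with $q=\dim M-1$. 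Since $M=G/P$ is homogeneous and the whole chain construction is natural, $\tilde N$ inherits a transitive action of (a suitable subgroup of) $G$, so it is a homogeneous space; I would make this explicit by identifying $\tilde N$ as an orbit space $G/H$ for the appropriate stabilizer $H$ of a chain.

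For non--flatness, the key point is to transport the non--vanishing of the curvature through the descent. By Lemma~\ref{l5}, the induced parabolic geometry of type $(\tG,\tP)$ is \emph{not} flat; its harmonic curvature $\tilde\ka_H$ is therefore non--zero (by the Lemma at the end of \ref{2.1}). The correspondence--space characterization in \ref{4.3} tells us exactly which components of $\tilde\ka_H$ are insertions-in-$E$--free --- those are the ones that survive to the Grassmannian structure on $\tilde N$, and since the geometry on $G/Q$ genuinely \emph{is} a correspondence space over $\tilde N$, the nontrivial part of $\tilde\ka_H$ is precisely the Weyl curvature of the Grassmannian structure on $\tilde N$ (recall from \ref{3.4} that for type $(2,q)$ the torsion vanishes here since $\tom$ is torsion--free, so the only remaining harmonic component is the Weyl curvature). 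Hence the Weyl curvature of $\tilde N$ is non--zero, so $\tilde N$ is not locally isomorphic to $\tG/\hat P$. I would phrase this by citing \cite[example 3.4]{C} for the explicit identification of the irreducible components of $\tilde\ka_H$ and noting that the torsion component is killed by Lemma~\ref{l5} while the Weyl component is not.

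For symmetry, I would work at the level of the homogeneous model and invoke Proposition~\ref{2.4}. Since $\tilde N=G/H$ is homogeneous under $G$ and every $g\in G$ acts by an automorphism of the Grassmannian structure (because it acts by automorphisms at every stage of the construction: on $G/P$, on $G/Q=\P_0TM$, on the path--geometry Cartan bundle, and hence on the descended structure on $\tilde N$), it suffices by transitivity to produce a symmetry at a single point, the base chain $o\in\tilde N$. By Proposition~\ref{2.4}, such a symmetry exists as soon as the structure group $\hat G_0$ of the Grassmannian geometry of type $(2,q)$ contains an element acting as $-\id$ on $\hat\fg_{-1}\cong\R^{2*}\otimes\R^q$; and by the first Proposition in \ref{3.4} this is always the case. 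Translating such a symmetry of the local model around by the $G$--action yields a symmetry at every point, so $\tilde N$ is symmetric; since the symmetry at $o$ already exists globally on $\tilde N$ (the model being homogeneous), we in fact get a global symmetry at each point.

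The main obstacle I expect is the careful bookkeeping in the second paragraph: one must be sure that the descent to $\tilde N$ is genuinely a twistor/leaf--space reduction (so that $G/Q$ is honestly the correspondence space over $\tilde N$, which needs $i_\xi\tilde\ka=0$ for $\xi$ in the $E$--direction, i.e.\ torsion--freeness via the simplification at the end of \ref{4.3}), and that the surviving curvature component is not accidentally zero --- this is exactly why one needs the \emph{explicit} description of the irreducible components of $\tilde\ka_H$ for the path geometry of chains from \cite[example 3.4]{C} and \cite[section 3.6]{CZ}, rather than a soft argument. The homogeneity and symmetry parts are then essentially formal consequences of \ref{2.4} and \ref{3.4} once the descended structure and its $G$--action are in place.
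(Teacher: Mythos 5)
Your first two steps (the Grassmannian structure on $\tilde N$ via Lemmas \ref{l5} and \ref{l4}, the identification $\tilde N\cong G/H$ by transitivity of $G$ on chains, and non--flatness by noting that the Cartan geometry on $\tilde N$ has the same total space and Cartan connection $\tom$ as the non--flat geometry on $\tilde M=G/Q$, so its curvature cannot vanish) are correct and essentially the paper's argument. The problem is the symmetry step. Proposition \ref{2.4} and the first Proposition of \ref{3.4} concern only the \emph{homogeneous model} $\tG/\hat P$, i.e.\ the flat Grassmannian; they say nothing about a non--flat almost Grassmannian structure. Since $\tilde N$ is non--flat, it is not even locally isomorphic to $\tG/\hat P$, so there is no sense in which you can ``translate a symmetry of the local model around by the $G$--action'': a symmetry of $\tilde N$ must by condition (iii) of the definition in \ref{2.3} be covered by an automorphism of the (non--flat) parabolic geometry on $\tilde N$, and the existence of an element of the structure group $\hat G_0$ acting as $-\id$ on $\hat\fg_{-1}$ only produces a candidate for the tangent map at a point, not an actual automorphism of the curved geometry. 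Indeed, the whole thrust of \ref{3.4} is that symmetries of non--flat Grassmannian structures are a strong geometric constraint, so their existence here cannot be a formal consequence of the model being symmetric.

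What is actually needed, and what your proposal omits, is the explicit computation of the isotropy group: $\tilde N\cong G/H$ where $H$ is the stabilizer of the chain $\exp tX\cdot P$, $X\in\fg_{-2}$, through the origin; one computes $\fh=\fg_{-2}\oplus\fg_0\oplus\fg_2$, i.e.\ $H=\exp\fg_{-2}\ltimes Q$. Since elements of $G$ act as automorphisms of the induced Cartan geometry on $\tilde M$ and descend to automorphisms of the Grassmannian structure on $\tilde N$ (this descent itself needs the quoted remark from \cite{Cor}, using connectedness of $\hat P/\tP$ --- your ``naturality at every stage'' is only a sketch of this), a symmetry at the base point is obtained by exhibiting an element of $H$ acting as $-\id$ on $\fg/\fh\cong\fg_{-1}\oplus\fg_1$. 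The block matrix with diagonal entries $-1,\Bbb I_n,-1$ does the job (and is in fact the unique such element), and transitivity then gives a symmetry at every point. Without this computation inside $G$, rather than inside $\tG$ or $\hat G_0$, the symmetry claim is unproven.
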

\begin{proof}
  Almost everything follows immediately from the previous profound
  preparation:

  Lemmas \ref{l5} and \ref{l4} yield that $\tilde N$ is endowed with a Grassmannian 
  structure and the fact that the induced Cartan geometry of
  type $(\tG,\tP)$ on $\tilde M=G/Q$ has a non--trivial curvature implies the
  curvature of the corresponding Cartan geometry on the twistor space $\tilde
  N$ is non--trivial as well.
  Since $\tilde M=\P_0TM$ is a homogeneous space and any chain is uniquely
  determined by an element of $\tilde M$, the group $G$ acts transitively on
  the space $\tilde N$ of all chains.
  Let $H\subset G$  be the stabilizer of the chain
  $\exp tX\cdot P$, $X\in\fg_{-2}$, passing through the origin in
  $M=G/P$.
  An easy direct computation shows that $H$ is the subgroup consisting of
  block matrices in $G$ so that its Lie algebra is
  $\fh=\fg_{-2}\oplus\fg_0\oplus\fg_2$, i.e.\ $H=\exp\fg_{-2}\ltimes Q$.
  Altogether, $\tilde N\cong G/H$ and consequently $T\tilde N\cong G\x_H(\fg/\fh)$.
  
  By the very construction, elements of $G$ act as automorphisms of the
  induced Cartan geometry on $\tilde M$ and since the quotient $\hat P/\tP$ 
  is obviously connected, these descend to automorphisms of the Grassmannian 
  structure on  $\tilde N$ by \cite[remark 2.4]{Cor}.
  In order to show there is a symmetry at any point of $\tilde N$, it suffices
  to find an element in $H$ which acts as $-\id$ on $\fg/\fh$.
  However, this is rather easy task and after a while of calculation one shows
  that the block matrix
  $$
    \pmat{-1&0&0\\0&\Bbb I_n&0\\0&0&-1}
  $$
  represents the unique element with this property.
\end{proof}

\subsection*{Remark}
In the proof above we have constructed a global symmetry of the Grassmannian
structure at the origin of $\tilde N=G/H$, which leads to a distinguished symmetry 
at each point. 
Any such symmetry is represented by an element of $G$ and it will be called
the \emph{$G$--symmetry}.
Any $G$--symmetry primarily defines an automorphism of the Lagrangean contact structure 
on $M=G/P$ and, easily, this is a symmetry on $M$ in the sense of \ref{2.3}.
Since the parabolic contact structure on $M$ is flat, there is a lot of
symmetries at any point, but only one of them induces a symmetry on
$\tilde N$.
On the other hand, apart from the $G$--symmetry, there may be another local
symmetries at any point of $\tilde N$.
However, according to Theorem \ref{twosym}, all the possible symmetries may 
differ from the $G$--symmetry in a very restricted sense.
More specifically, by the homogeneity of the induced Grassmannian structure on
$\tilde N$, the corresponding Weyl curvature is nowhere vanishing, hence the
covector $\U_x$ from Theorem \ref{twosym} measuring the difference of two
symmetries at $x$ must be of rank one.

\subsection{Invariant connection}		\label{4.5}
Let us conclude by a discussion on an affine connection on
$\tilde N$ which preserves the Grassmannian structure and which is invariant with respect 
to some symmetries.
Note that the following statement can be seen as an instance of 
\cite[Theorem 1]{B} which deals with invariant connections on reductive
homogeneous spaces with a compatible additional structure of general $|1|$--graded 
parabolic  geometry.
Of course, in our specific setting we can approach the result in a more direct
way.
\begin{thm*}
  Let $\tilde N=G/H$ be the space of chains of model Lagrangean contact
  structure on $M=G/P$.
  Then there is a $G$--invariant torsion--free affine connection $\tilde\na$ on 
  $\tilde N$ preserving the Grassmannian structure.
\end{thm*}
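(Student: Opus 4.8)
The plan is to exploit the fact that $\tilde N=G/H$ is a reductive homogeneous space and to build $\tilde\na$ as the canonical connection associated with a suitable reductive decomposition. First I would look for an $\Ad(H)$--invariant complement $\fm\subset\fg$ to $\fh=\fg_{-2}\oplus\fg_0\oplus\fg_2$, so that $\fg=\fh\oplus\fm$ as $H$--modules. The obvious candidate is $\fm=\fg_{-1}\oplus\fg_1$, i.e.\ $\fm=\tg_{-1}^E\oplus\fg^R_{-1}\oplus\fg^L_1\oplus\fg_1^R$ in the block notation, since $\fg_{-2}\oplus\fg_0\oplus\fg_2$ is visibly a subalgebra and the decomposition $\fg_{-1}\oplus\fg_1$ is $\Ad(G_0)$--stable; one checks that $\Ad$ of $\exp\fg_{-2}$ and $\exp\fg_2$ also preserves it, using that $[\fg_{-2},\fg_1]\subset\fg_{-1}$, $[\fg_2,\fg_{-1}]\subset\fg_1$ and the outer terms land back in $\fm$. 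Granting this, the negative part $\fg_{-1}$ maps isomorphically onto $\fg/\fh$, compatibly with the identification $T\tilde N\cong G\x_H(\fg/\fh)$ of the proof of Theorem \ref{th1}, and the canonical connection of the reductive pair $(\fg,\fh,\fm)$ is the desired $G$--invariant affine connection $\tilde\na$.

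Next I would verify that $\tilde\na$ preserves the Grassmannian structure. The underlying almost Grassmannian structure on $\tilde N$ is the one induced by the parabolic geometry of type $(\tG,\hat P)$ through the twistor construction of \ref{4.3}, and its structure group reduction of $\gr(T\tilde N)=T\tilde N$ to $\hat G_0$ is carried by the $H$--module structure on $\fg/\fh\cong\fg_{-1}$; since $H$ acts on $\fg_{-1}$ through its quotient $G_0$ and the canonical connection has holonomy inside $\Ad(H)|_{\fm}$, it automatically reduces to $\hat G_0$ and hence preserves the auxiliary bundles $E\to\tilde N$, $F\to\tilde N$ and the isomorphism $E^*\otimes F\cong T\tilde N$. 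Equivalently, one can phrase this via Weyl structures: the reductive splitting determines a $G$--invariant Weyl structure $\si:\tilde\G_0\to\tilde\G$ for the parabolic geometry of type $(\tG,\hat P)$, and $\tilde\na$ is its Weyl connection, which by construction preserves the structure.

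Finally I would establish torsion--freeness. The torsion of the canonical connection of a reductive homogeneous space at the origin is, up to sign, the $\fm$--component of the bracket $[\ ,\ ]:\fm\x\fm\to\fg$. So the task reduces to the purely algebraic check that for $X,Y\in\fg_{-1}$ the $\fg_{-1}$--component of $[X,Y]$ vanishes; but $[\fg_{-1},\fg_{-1}]\subset\fg_{-2}\subset\fh$, so this component is automatically zero, and by $G$--invariance the torsion vanishes everywhere. Alternatively, torsion--freeness is immediate from Proposition \ref{3.2} together with Theorem \ref{th1}: $\tilde N$ is a Grassmannian locally symmetric space, so \emph{every} underlying Weyl connection — in particular the $G$--invariant one just built — is torsion--free, once one observes that $G$--invariance of the reductive splitting forces $\tilde\na$ to be one of the Weyl connections of the structure. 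The main obstacle I anticipate is not any single computation but making the first step airtight: one must be careful that $\fm=\fg_{-1}\oplus\fg_1$ really is $\Ad(H)$--invariant (the naive complement $\fg_{-1}$ alone is not, because $\Ad(\exp\fg_2)$ mixes $\fg_{-1}$ into $\fg_1$), and that the resulting canonical connection is the same object that the underlying twistor construction produces, rather than merely \emph{some} invariant connection; checking this compatibility — e.g.\ by identifying the reductive splitting with a genuine Weyl structure of the $(\tG,\hat P)$--geometry — is where the real work lies.
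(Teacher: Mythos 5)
Your overall strategy --- take the reductive complement $\mathfrak{m}=\fg_{-1}\oplus\fg_1$ of $\fh$ and use the associated canonical invariant connection --- is viable, and the paper itself confirms in remark \ref{4.6}(2) that this canonical connection coincides with the connection of the Theorem. But as written there is a genuine gap exactly at the point you yourself flag: that $\tilde\na$ preserves the Grassmannian structure is never established, and the argument you sketch for it is incorrect. First, $\fg/\fh\cong\mathfrak{m}=\fg_{-1}\oplus\fg_1$, not $\fg_{-1}$ (the quotient has twice the dimension of $\fg_{-1}$). Second, $H$ does not act on $\mathfrak{m}$ through $G_0$: for $0\ne Z\in\fg_{\pm 2}$ one has $[Z,\fg_{\mp 1}]\subset\fg_{\pm 1}$ nontrivially, so $\Ad(\exp Z)$ genuinely mixes the two summands and the isotropy representation does not factor through $G_0$. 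Hence nothing is ``automatic'' about the holonomy reducing to $\hat G_0$: what must be shown is that the isotropy representation of $H$ on $\fg/\fh$, transported by $\al$ to $\tg/\hat\fp$, lands in the image of $\hat G_0$, i.e.\ respects the bundles $E$, $F$ and the isomorphism $E^*\otimes F\cong T\tilde N$. That is precisely the content of the paper's proof: the homomorphism $i:Q\to\tP$ extends to $\hat i:H\to\hat P$ with $\hat i'=\al|_\fh$, torsion freeness of $\tom$ makes $(G\x_H\hat P\to\tilde N,\tom)$ a globally defined parabolic geometry of type $(\tG,\hat P)$ with correspondence space $\tilde M$, and the observation $\al(\fh)\subset\hat\fg_0$ gives $\hat i(H)\subset\hat G_0$, so that $G\x_H\hat G_0\to\tilde N$ is a $G$--invariant Weyl structure whose Weyl connection is $\tilde\na$. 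Your closing sentence defers exactly this verification (``identifying the reductive splitting with a genuine Weyl structure'') as the remaining work; but that verification is the proof, not an afterthought, and no argument for it is given.

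Two smaller points. Your torsion computation treats only $X,Y\in\fg_{-1}$, which is not all of $\mathfrak{m}$; the repair is easy, since also $[\fg_1,\fg_1]\subset\fg_2$ and $[\fg_{-1},\fg_1]\subset\fg_0$, so $[\mathfrak{m},\mathfrak{m}]\subset\fh$ and $(\fg,\fh)$ is in fact a symmetric pair (reflecting the element $\operatorname{diag}(-1,\Bbb I_n,-1)$ found in the proof of Theorem \ref{th1}), whence the canonical connection is torsion free. Your alternative torsion argument via Proposition \ref{3.2} again presupposes that $\tilde\na$ is a Weyl connection of the Grassmannian structure, i.e.\ the very compatibility that is missing. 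Finally, the block description ``$\tg_{-1}^E\oplus\fg^R_{-1}\oplus\fg^L_1\oplus\fg_1^R$'' mixes the algebras $\fg$ and $\tg$; the complement lives entirely in $\fg$.
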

\begin{proof}
  As we know from \ref{4.3}, 
  the construction of twistor space and the corresponding Cartan geometry is very local 
  in nature.
  However, in our model case, there is the global surjective submersion 
  $p:\tilde M=G/Q\to G/H=\tilde N$ to the twistor space.
  The principal $\tP$--bundle $\pi:G\x_Q\tP\to\tilde M$ is defined according to the
  Lie group homomorphism $i:Q\to\tP$, which we referred to in step (3) in \ref{4.4}.
  The homomorphism $i$ can be extended to a homomorphism $\hat i:H\to\hat P$ so that 
  $\hat i'=\al|_\fh$,  the total space of 
  the principal bundle $G\x_Q\tP\to\tilde M$ is identified with $G\x_H\hat P$, and
  the composition $p\o\pi:G\x_H\hat P\to\tilde N$ is a principal $\hat P$--bundle.
  The properties of the Cartan connection $\tom$ as above, namely the torsion freeness, 
  yield the couple $(G\x_H\hat P\to\tilde N,\tom)$ is a parabolic geometry of type 
  $(\tG,\hat P)$ and $\tilde M$ is the correspondence space for $\tP\subset\hat P$.

  Now, let $\hat G_0$ be the Lie subgroup of $\tG$ as in \ref{3.3}.
  Namely, $\hat G_0$ is represented by block diagonal matrices with the Lie algebra 
  $\hat\fg_0=\tg_{-1}^E\oplus\tg_0\oplus\tg_1^E$, i.e.\ the reductive part of $\hat\fp$.
  Since $\hat i:H\to\hat P$ is a homomorphism of Lie groups, $\hat
  i'=\al|_\fh$, and $\al(\fh)\subset\hat\fg_0$,  it follows 
  that $\hat i(H)\subset\hat G_0$.
  This gives rise to the principal $\hat G_0$--bundle  $G\x_H\hat G_0\to\tilde N$,
  which is a distinguished reduction of $G\x_H\hat P\to\tilde N$ to the structure
  group $\hat G_0\subset\hat P$.
  In terms of \ref{2.5}, this is a Weyl structure, which is evidently $G$--invariant.
  Finally, let $\tilde\na$ be the affine connection on $\tilde N$ induced by
  the Weyl structure above.
  By construction, $\tilde\na$ is $G$--invariant and torsion--free and,
  by general principles, it preserves the underlying geometric structure on $\tilde N$. 
\end{proof}

In remark \ref{4.4} we have defined the notion of $G$--symmetry at $x\in\tilde N$.
Since any $G$--symmetry is induced by an element of $G$, the connection
$\tilde\na$ constructed above is invariant with respect to all
$G$--symmetries.
Hence:
\begin{cor*}
  Let $\tilde\na$ be the affine connection on $\tilde N=G/H$ from the Theorem above. 
  Then $(\tilde N,\tilde\na)$ is an affine symmetric space in the classical
  sense, whose unique  symmetry at each point is the $G$--symmetry.
\end{cor*}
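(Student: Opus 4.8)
The plan is to show that the $G$--symmetry is itself an affine transformation of $\tilde\na$, that it obeys the classical symmetry axioms, and that no other symmetry of $(\tilde N,\tilde\na)$ exists at a given point. For the first step, recall from the proof of Theorem \ref{th1} and from remark \ref{4.4} that the $G$--symmetry at a point $x=gH\in\tilde N=G/H$ is the base map on $\tilde N$ of left multiplication by $gh_0g^{-1}\in G$, where $h_0\in H$ is represented by the block matrix $\pmat{-1&0&0\\0&\Bbb I_n&0\\0&0&-1}$ and satisfies $\Ad_{h_0}=-\id$ on $\fg/\fh$. Since $\tilde\na$ is $G$--invariant by the Theorem above and the $G$--symmetry is realised by the action of an element of $G$, it preserves $\tilde\na$; that is, it is an affine transformation of $(\tilde N,\tilde\na)$.

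Next I would verify the remaining requirements for a classical local symmetry $s_x$ at $x$. By construction $s_x(x)=x$, and because the Grassmannian structure on $\tilde N$ is $|1|$--graded we have $T^{-1}\tilde N=T\tilde N$, so condition (ii) of the definition in \ref{2.3} yields $T_xs_x=-\id$ on the whole of $T_x\tilde N$. Writing $s_x$ in $\tilde\na$--normal coordinates centred at $x$ it becomes the linear map $v\mapsto-v$, so $x$ is an isolated fixed point. Since $s_x$ is globally defined on $\tilde N$, this exhibits $(\tilde N,\tilde\na)$ as an affine symmetric space in the classical sense of \cite[chapter XI]{KN2}.

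For uniqueness, let $t_x$ be any affine transformation of $\tilde\na$ with $t_x(x)=x$ and $T_xt_x=-\id$. Affine transformations carry $\tilde\na$--geodesics to $\tilde\na$--geodesics, so on a normal neighbourhood of $x$ the map $t_x$ must equal $\exp_x^{\tilde\na}\o(-\id)\o(\exp_x^{\tilde\na})^{-1}$, and an affine transformation of a connection on a connected manifold is determined by its $1$--jet at a single point. Hence $t_x$ coincides with the $G$--symmetry $s_x$, which is therefore the unique symmetry at $x$.

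The point worth emphasising is precisely this last rigidity. A parabolic geometry is not of first order, so its symmetries are far from unique --- compare Proposition \ref{2.4} and the discussion around Theorem \ref{twosym}, where several distinct parabolic symmetries at one point are exhibited --- whereas an affine connection \emph{is} a first--order structure, and once the distinguished invariant connection $\tilde\na$ has been singled out this ambiguity disappears; so the only real content is pinning down that the classical symmetry must genuinely be $\tilde\na$--affine. The sole technical caveat is that the $1$--jet rigidity argument requires $\tilde N$ connected; if $\tilde N=G/H$ were disconnected one simply passes to the identity component, on which all the preceding constructions persist.
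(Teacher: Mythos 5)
Your proof is correct and takes essentially the same route as the paper: the whole point is that each $G$--symmetry is realised by (conjugates of) an element of $G$, hence preserves the $G$--invariant connection $\tilde\na$ from the Theorem, and together with $T_xs_x=-\id$ this gives the classical affine symmetric space structure. The details you add --- the isolated fixed point via $\tilde\na$--normal coordinates and the uniqueness via the fact that an affine transformation of a connected manifold is determined by its $1$--jet at one point --- are exactly the standard classical facts the paper leaves implicit (and the connectedness caveat is vacuous here, since $\tilde N$ is connected).
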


\subsection{Final remarks}			\label{4.6}
(1) For a reader's convenience and better orientation in the text, 
let us gather all the relations we have discussed from \ref{4.4} till now into 
the following picture:
$$
\xymatrix{
 & & G\x_Q\tP \ar[dd]^(.3){\tP} \ar@{=}[r] & G\x_H\hat P \ar[ddd]^(.2){\hat P} & \\
 & G\ \ar@{^{(}->}[ur] \ar[dr]^(.3)Q \ar[ddd]_(.2)P & & & 
 	G\x_H\hat G_0 \ar@{_{(}->}[ul] \ar[ddl]^(.2){\hat G_0} \\
 & & \tilde M=G/Q \ar[ddl] \ar[dr] & & \\
 & & & \tilde N=G/H & \\
 & M=G/P & & &
 }
$$

(2) Note that $\tilde N=G/H$ is a reductive homogeneous space, namely, the 
reductive decomposition is $\fg=\frak{n}\oplus\fh$ where 
$\fh=\fg_{-2}\oplus\fg_0\oplus\fg_2$ as above and $\frak{n}=\fg_{-1}\oplus\fg_1$.
In particular, restriction of the Cartan--Killing form of $\fg$ to $\frak{n}$
gives rise to a $G$--invariant (pseudo--)Riemannian metric on $\tilde N$ whose
Levi--Civita connection is the canonical $G$--invariant affine connection on 
$G/H=\tilde N$.
Since both this canonical connection and the connection from Theorem \ref{4.5}
are invariant with respect to the $G$--symmetry at each point, so is their
difference tensor, which  has to vanish by lemma \ref{lema}. 
Hence the two connections do actually coincide. 

(3)
Note also that for the lowest possible dimension of $M$, i.e.\ 3, the dimension of
$\tilde N$ is 4 and the induced almost Grassmannian structure is of type
$(2,2)$.
As we mentioned in \ref{3.4}, this structure on $\tilde N$ is equivalent to
the conformal pseudo--Riemannian structure of split signature.
Hence the constructions above yield to an example of non--flat conformal symmetric 
space in this specific signature.

(4) 
Note finally that the procedure of \ref{4.4} and \ref{4.5} can be applied to any parabolic contact
geometry, however, the resulting structure on the space of chains may differ.
For instance, starting with the flat projective contact structure, it turns
out that the associated path geometry of chains is locally flat, hence
it descends to a locally flat almost Grassmannian structure on the space of chains.
On the other hand, the story for CR structures of hypersurface type is
completely parallel to that in the Lagrangean contact case.
Of course, understanding the behavior for another parabolic contact structures is 
on the order of the day.


\end{document}